\documentclass[11pt,reqno,twoside]{amsart}
\usepackage{amsmath}
\usepackage{amssymb}
\usepackage[margin=1.4in]{geometry}
\usepackage{epsfig}
\usepackage{color}
\usepackage{array, boldline, makecell, booktabs}
\usepackage{subcaption}
\usepackage{algorithmicx}
\usepackage[ruled]{algorithm}
\usepackage{algpseudocode}
\usepackage{algc}
\newenvironment{alginc}[1][pseudocode]{\medskip\algsetlanguage{#1}\begin{algorithmic}[0]}{\end{algorithmic}\medskip}

\usepackage{mathtools}

\numberwithin{equation}{section}

\title[
Rationals in the Cantor set]{
The distribution of rational numbers on Cantor's middle thirds set}

\author{Alexander D. Rahm}
\address{University of Luxembourg 
{\tt Alexander.Rahm@uni.lu}}  

\author{Noam Solomon}
\address{Massachusetts Institute of Technology
{\tt noams@mit.edu}}

\author{Tara Trauthwein}
\address{University of Luxembourg 
{\tt tara.trauthwein.003@student.uni.lu}
}

\author{Barak Weiss}
\address{Tel Aviv University
{\tt barakw@tauex.tau.ac.il}}

\newif\ifdraft\drafttrue

\draftfalse

\font\sb = cmbx8 scaled \magstep0

\font\sn = cmssi8 scaled \magstep0

\long\def\combarak#1{\ifdraft{\sb #1 }\else\ignorespaces\fi}

\newcommand\name[1]{\label{#1}{\ifdraft{\sn [#1]}\else\ignorespaces\fi}}

\newcommand\eq[2]{{\ifdraft{\ \tt [#1]}\else\ignorespaces\fi}\begin{equation}\label{eq:
#1}{#2}\end{equation}}

\newcommand {\equ}[1]     {\eqref{eq: #1}}

\newcommand{\CC}{{\mathcal{C}}}

\newcommand{\R}{{\mathbb{R}}}

\newcommand{\Z}{{\mathbb{Z}}}

\newcommand{\N}{{\mathbb{N}}}

\newcommand{\MLO}{\operatorname{MLO}}

\newcommand{\df}{{\, \stackrel{\mathrm{def}}{=}\, }}

\newcommand{\FF}{{\mathcal{F}}}

\newcommand{\til}{\widetilde}

\newcommand{\vre}{\varepsilon}

\font\sb = cmbx8 scaled \magstep0

\newcommand {\ignore}[1]  {}

\newtheorem{thm}{Theorem}[section]

\newtheorem{prop}[thm]{Proposition}
\newtheorem{proposition}[thm]{Proposition}

\newtheorem{remark}[thm]{Remark}

\newtheorem{conj}{Conjecture}

\date{\today}
\begin{document}

\ignore{

\begin{abstract}

\end{abstract}

}

\begin{abstract}
We give a heuristic argument predicting that the number $N^*(T)$ of rationals
$p/q$ on 
Cantor's middle thirds set $\mathcal{C}$ such that 
$\gcd (p,q)=1$ and $q \leq T$, has asymptotic
growth $O(T^{d+\vre})$, for $d = \dim \mathcal{C}$. 
We also describe extensive numerical computations
supporting this heuristic. 
Our heuristic predicts a 
similar asymptotic if $\mathcal{C}$ is replaced with any similar fractal with a
description in terms 
of missing digits in a base expansion. Interest in the growth of $N^*(T)$ is
motivated by a problem of Mahler on intrinsic Diophantine approximation on
$\mathcal{C}$.

\end{abstract}

\maketitle

\section{Introduction}

Let $\CC$ denote Cantor's middle thirds set, i.e. all numbers
represented as  $x =
\sum_{1}^{\infty} a_i 3^{-i}$ with $a_i = a_i(x) \in \{0,2\}$ for all
$i$. Let $N^*(T)$ denote the number of rationals number of the form
$p/q$, with $p$ and $q$ coprime, which belong to $\CC$ and for which
$0< q \leq T$. Motivated by questions in Diophantine approximation,
our goal will be to understand the asymptotic growth rate of $N^*(T)$.

Everything we will say in the sequel will apply with minor
modifications to a more general situation in which
$\CC$ is the set of numbers defined by a restriction in a digital expansion,
i.e. for some integer $b \geq 3$ and some proper subset $\mathcal{F}$
of $\{0, \ldots,
b-1\}$ we will let $\CC$ denote the set of numbers $x = \sum_1^{\infty} {a_i} b^{-i}$
with all $a_i \in \mathcal{F}$. To simplify notation we will stick
throughout to the standard ternary set. When writing a rational as
$p/q$ we always assume that $p$ and $q$ are coprime.  

Fix $c \in (0,1)$, let $I_T$ denote the interval $[(1-c)T,T]$ and let 
\[
\begin{split}
N(T) & \df \# \, \left\{\frac{p}{q} \in \CC : q \in I_T \right\} \\
\til N(T) & \df \# \, \left\{\frac{p}{q} \in \CC  \mathrm{\ purely
  \ periodic}  : q \in I_T \right \} \\ 
\til N^*(T) & \df  \# \, \left\{\frac{p}{q} \in \CC : 0< q \leq
T,\ \frac{p}{q} \mathrm{\ is
  \ purely \ periodic} 
\right\}.
\end{split}
\]
Note that these quantities depend on $c$ but this will be suppressed
from the notation. The notations $A(T)= O(B(T))$ and $A(T) \ll B(T)$ mean that $A(T)/B(T)$
is bounded above by a positive constant, and 
$A(T) \asymp B(T)$ means that
the $A(T) \ll B(T) \ll A(T)$. 
\begin{conj}\name{conj: main}
Let $d$ be the Hausdorff dimension of $\CC$, i.e. $d = \log 2 / \log
3$, and in the general case, $d = \log |\FF|/\log b$. 
For each $\vre>0$ we have 
$\til N(T) = O(T^{d+\vre})$. 
\end{conj}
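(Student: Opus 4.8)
The plan is to pass from rationals in $\CC$ to integers with restricted base-$3$ digits, and then to estimate how many such ``Cantor integers'' are divisible by a prescribed modulus. First I would record the characterization of purely periodic points: if $p/q$ is purely periodic in base $3$ then automatically $\gcd(q,3)=1$, and with $n=\operatorname{ord}_q(3)$ (the multiplicative order of $3$ modulo $q$) one has $p/q=s/(3^n-1)$ where $s=p(3^n-1)/q\in[0,3^n)$. Such a point lies in $\CC$ exactly when every base-$3$ digit of $s$ lies in $\{0,2\}$; call such an $s$ a \emph{Cantor integer}. Thus $\til N(T)=\sum_{q\in I_T,\ \gcd(q,3)=1}M(q)$, where $M(q)$ counts Cantor integers $s\in[0,3^n)$ with $\gcd(s,3^n-1)=(3^n-1)/q$, and in particular $M(q)\le C_n(k_q)$ with $k_q=(3^n-1)/q$ and $C_n(k)\df\#\{s\in[0,3^n):\ s\text{ is a Cantor integer},\ k\mid s\}$.

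Next I would organize the sum by the period $n$. Since $q\mid 3^n-1$ and $q\ge(1-c)T$, every contributing $q$ has $n\ge n_0\df\lceil\log_3((1-c)T)\rceil$, so that
\[
\til N(T)\ \le\ \sum_{n\ge n_0}\ \sum_{\substack{q\in I_T\\ \operatorname{ord}_q(3)=n}}C_n(k_q).
\]
The point of restricting to $q\in I_T$ is that then $k_q=(3^n-1)/q\asymp 3^n/T$ is large, so only a small proportion of Cantor integers can be divisible by it. Granting the heuristic equidistribution bound $C_n(k)\ll 2^n/k$ (up to a factor $k^{o(1)}$), and using the divisor bound $\tau(3^n-1)=3^{o(n)}$ to count the admissible $q$, the inner sum is $\ll T\,(2/3)^n\,3^{o(n)}=T\,3^{(d-1+o(1))n}$, where I used $2=3^{d}$. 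Because $d<1$, the series $\sum_{n\ge n_0}3^{(d-1+\vre)n}$ is geometric and dominated by its first term $3^{(d-1+\vre)n_0}\asymp T^{d-1+\vre}$; multiplying by the outer factor $T$ gives $\til N(T)\ll T^{d+\vre}$, as required. This is consistent with the trivial bound $\sum_{q:\operatorname{ord}_q(3)=n}M(q)\le 2^n$: it is precisely the restriction $q\in I_T$ that turns the divergent estimate $\sum_n 2^n$ into a convergent series.

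The single genuinely hard step is the equidistribution input $C_n(k)\ll 2^n/k\cdot k^{o(1)}$. By orthogonality, $C_n(k)=\frac1k\sum_{a=0}^{k-1}\prod_{i=0}^{n-1}\bigl(1+e(2a\,3^i/k)\bigr)$, where $e(x)=e^{2\pi i x}$; the term $a=0$ produces the main term $2^n/k$, and one needs the off-diagonal terms $a\neq0$ to contribute a lower-order amount. This is a statement about digit-restricted integers lying in residue classes, in the spirit of Erd\H{o}s--Mauduit--S\'ark\"ozy. The difficulty is that the bound is genuinely false for special moduli: whenever $\operatorname{ord}_k(3)=m$ is small the product collapses to $\prod_{j=0}^{m-1}\bigl(1+e(2a\,3^j/k)\bigr)^{n/m}$, which for suitable $a$ can be as large as $2^{(1-o(1))n}$, so that the Cantor integers concentrate heavily in the class $0\bmod k$. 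A correct proof must therefore show that the $q\in I_T$ for which such a conspiracy occurs are rare enough that their total contribution stays $T^{o(1)}$. Controlling this interaction between the multiplicative structure of $3^n-1$ and the additive digit pattern defining $\CC$ is, I expect, the crux of the problem, and is presumably the reason the statement is only conjectured rather than proved.
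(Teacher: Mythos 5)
The statement you were given is Conjecture \ref{conj: main}: the paper does not prove it, offering only the probabilistic heuristic of \S\ref{section: heuristic} together with numerical evidence, and your proposal --- as you yourself conclude --- is likewise not a proof but a reduction to an unproved equidistribution hypothesis. With that understood, your argument is essentially the paper's heuristic recast in deterministic language, and its skeleton matches \equ{first expression} and the surrounding estimates nearly line by line: the passage to $p/q = s/(3^n-1)$ with $s$ a Cantor integer is Proposition \ref{prop: rationals base 3}; your organization by the period $n$ and the divisor bound $\tau(3^n-1)=3^{o(n)}$ for the number of admissible $q \in I_T$ is the paper's estimate of $\# L(\ell,T)$ (the paper splits the range at $\ell = \lambda \log_3 T$ with $\lambda = \frac{2-d}{1-d}$, using the trivial bound $\# L \leq T$ beyond it, while you absorb everything into $3^{o(n)}$ --- a harmless simplification); and your key input $C_n(k) \ll (2^n/k)\,k^{o(1)}$ is the deterministic analogue of model (*), which predicts roughly $q\,(2/3)^{\ell(q)}$ Cantor rationals of denominator $q$. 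The difference in framing is worth noting. The paper bounds only the \emph{expectation} of $\til N(T)$ and so needs the separate concentration step of \S\ref{section: fluctuations} (Markov and Borel--Cantelli); your version, were its hypothesis true in a suitably averaged sense, would yield the conjecture outright with no such step. Conversely your hypothesis, being deterministic, is falsifiable --- and false for special moduli, as you correctly observe via the collapse of the exponential-sum product when $\mathrm{ord}_k(3)$ is small. That failure is exactly the phenomenon the paper documents as ``symmetries'' in \S\ref{subsec: symmetries} and Tables \ref{symmetries1} and \ref{symmetries} (e.g. $q = 3^r+1$, where $N_q$ exceeds the prediction by a factor of about $(3/2)^r$), and proving that such conspiracies contribute negligibly on average over $q \in I_T$ is precisely what remains open. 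So there is no gap beyond the one you flag yourself; your formulation has the merit of isolating the exact number-theoretic statement whose averaged form would settle the conjecture.
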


This conjecture was also made by Broderick, Fishman and Reich in \cite{BFR}. An
upper bound $N(T)=O(T^{2d})$ was obtained by Schleischitz in
\cite[Thm. 4.1]{Schlei}. 
%
%
Our heuristic actually predicts a more precise upper bound
for $\til N(T)$, see Remark \ref{remark: better}. The exponent $d$ is optimal in
view of Proposition \ref{prop: lower bound}. 

Since numbers in $\CC$ are explicitly given in terms of their base
3 expansion, it is possible to count their number as a function of 
the complexity of their base 3 expansions. But this says nothing about
the denominator $q$ in reduced form; 
it may happen that a rational with a complicated 
base 3 expansion corresponds to a reduced fraction $p/q$ with $q$
small. The basic heuristic principle behind Conjecture \ref{conj: main},
is that the two events of having a small denominator relative to
the complexity of the base 3 expansion, and of belonging to $\CC$, are
probabilistically independent. We will make this heuristic more
precise below.

Some computational evidence for Conjecture \ref{conj: main} is given
in \cite{BFR}. Our goal in this paper is to present more evidence supporting
it. We will prove that the conjectured asymptotics are
lower bounds for $N^*(T)$ and $\til N^*(T)$; we will describe extensive computations
consistent with this conjecture; and we will discuss the heuristic
motivating Conjecture \ref{conj: main}, exhibiting some numerical results
which lend some support to this heuristic.

\subsection*{Organization of the paper}
In \S\ref{section: history} we discuss some problems in Diophantine
approximation which led us to this problem, and derive a Diophantine
consequence from Conjecture \ref{conj: main}. In \S\ref{section:
  notation} we discuss basic properties of base 3 expansions, which
yield lower bounds on $N^*(T)$ and $\til N^*(T)$. We also explain that
the main quantity of interest is $\til N(T)$. In \S\ref{section:
  heuristic} we introduce a simple 
probabilistic model and use it to predict $\til N(T)$. Some
oversimplifications in the probabilistic models lead to incorrect
predictions, and we modify the model slightly in \S\ref{section:
  corrections} to remedy this, at the same time showing that the
revised model makes the same predictions for the growth of the
expectation of $\til
N(T)$. We discuss fluctuations and the relation of expectations to
asymptotic behavior, in \S\ref{section: fluctuations}. 
Our computational
evidence for our conjectures are given throughout the paper. 

\subsection*{Acknowledgements}
A. Rahm would like to thank Gabor Wiese and the University of Luxembourg for funding his research. 
\textit{Noam's funding: To be updated.}
The research of B. Weiss was supported by ISF grant 2095/15 and BSF
grant 2016256.

\section{Motivation and historical background}\name{section: history}
The classical problem in Diophantine approximation may be formulated
as follows. Given a decreasing function $\varphi: \R_+ \to \R_+$ and a
real number $x$, are there infinitely many rationals $p/q$ such that $|x
- p/q|< \varphi(q)$? In case this holds one says that $x$ is {\em
  $\varphi$-approximable}. For some choices of $x$ and $\varphi$,
determining whether $x$ is $\varphi$-approximable is considered hopelessly
difficult (e.g. $\varphi(q) = 10^{-100}/q^2$, with $x =
2^{1/3}$ or $\pi$); a fruitful line of research is to  
fix $\varphi$ and ask about the measure of $\varphi$-approximable
numbers, with
respect to some measure. Some classical results in diophantine
approximation are:
\begin{itemize}
\item[(Dirichlet)] Every $x$ is $1/q^2$-approximable. 
\item[(Khinchin)]  With respect
  to Lebesgue measure, if $\sum q\varphi(q)$ converges then almost
  no $x$ is $\varphi$-approximable, and if $\sum
 q \varphi(q)$ diverges then almost
 every $x$ is $\varphi$-approximable. 
\item[(Jarn\'ik)] The set 
$$\mathrm{BA} \df \left \{ x : \exists c>0 \mathrm{\ s.t. \ } x
  \mathrm{\ is \ not \ } c/q^2\mathrm{-approximable} \right \}$$
has Hausdorff dimension 1, but Lebesge measure zero. 
\end{itemize}

One measure to consider in place of Lebesgue measure
in such statements, is the coin tossing measure (assigning equal
probability 1/2 to the digits 0,2 in base 3 expansion) on Cantor's ternary set
$\CC$. We give a brief list of activity concerning this type of
question. 

In 1984, Mahler \cite{Mahler} asked how well numbers in $\CC$ can be
approximated 
\begin{itemize}
\item[(i)] by rationals in $\R$. 
\item[(ii)] by rationals in $\CC$. 
\end{itemize}
Question (i) can be formalized in various ways, e.g. for which functions
$\varphi$, does $\CC$ contain $\varphi$-approximable numbers? For
which $\varphi$ is almost every number in $\CC$ (with respect to the
natural coin-tossing measure)
$\varphi$-approximable? for which $\varphi$ is the set of numbers in
$\CC$ which are $\varphi$-approximable of the same Hausdorff dimension
as that of $\CC$? There has been a lot of recent activity concerning
these and similar questions, see
\cite{cantor, Fishman, LSV, Bugeaud, SW} and the references therein. 

Question (ii), which is referred to as an {\em intrinsic
  approximation} problem, has not been nearly as well-studied. 
Broderick, Fishman and Reich \cite{BFR} proved an analogue of
Dirichlet's theorem for Cantor sets and other missing digit sets. 
Fishman and Simmons \cite{FS} extended the main  result of \cite{BFR}
to a more general 
class of fractal subsets of $\R$. A major difficulty in intrinsic
approximation problems is that
there is no reasonable understanding of the growth of the function
$N(T)$, $\til N(T)$ as described above; 
bounds on these functions will yield some
progress on Mahler's question (ii). In particular, 
Conjecture \ref{conj: main}
implies (see \cite{BFR} for the derivation): 
\begin{conj}\name{conj: mahler}
For almost every $x \in \CC$, with respect to the coin-tossing
measure, for any $\vre>0$, there are only finitely many rationals $p/q
\in \CC$ such that 
\eq{eq: approx}{
\left |x - \frac{p}{q} \right| < \frac{1}{q^{1+\vre}}.
}
\end{conj}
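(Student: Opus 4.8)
The plan is to deduce the statement from Conjecture \ref{conj: main} by a convergence Borel--Cantelli argument, as in \cite{BFR}. Write $\mu$ for the coin-tossing measure on $\CC$, and for each reduced fraction $p/q \in \CC$ put
\[
A_{p/q} \df \left\{ x \in \CC : \left| x - \frac{p}{q} \right| < \frac{1}{q^{1+\vre}} \right\}.
\]
A point $x$ satisfying \equ{eq: approx} for infinitely many $p/q \in \CC$ is exactly a point of $\limsup_{p/q} A_{p/q}$, so for a fixed $\vre>0$ it suffices to prove $\sum_{p/q \in \CC} \mu(A_{p/q}) < \infty$; the convergence Borel--Cantelli lemma then gives $\mu\!\left(\limsup_{p/q} A_{p/q}\right)=0$. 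To obtain the statement for every $\vre>0$ simultaneously, I would run this for the countable family $\vre = 1/n$ and intersect the resulting full-measure sets, using that \equ{eq: approx} becomes more restrictive as $\vre$ grows.

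The first ingredient is the measure estimate. The coin-tossing measure is Ahlfors $d$-regular --- a level-$n$ cylinder has $\mu$-measure $2^{-n}$ and diameter $3^{-n}$, and $2^{-n}=(3^{-n})^{d}$ since $3^{d}=2$, so $\mu(B(x,r))\asymp r^{d}$ for $x\in\CC$. As $A_{p/q}$ is the intersection of $\CC$ with an interval of length $2q^{-(1+\vre)}$ centred at $p/q\in\CC$, this yields
\[
\mu(A_{p/q}) \ll q^{-d(1+\vre)}.
\]

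The second ingredient is the count. Grouping the fractions by dyadic ranges $2^{k}\le q<2^{k+1}$, the number of $p/q\in\CC$ in the $k$-th range is at most $N^*(2^{k+1})$. Conjecture \ref{conj: main}, together with the reduction of the full count to the purely periodic count carried out in \S\ref{section: notation}, gives $N^*(T)=O(T^{d+\delta})$ for every $\delta>0$. Combining the two ingredients,
\[
\sum_{p/q \in \CC} \mu(A_{p/q}) \ll \sum_{k\ge 0} 2^{k(d+\delta)}\,2^{-kd(1+\vre)} = \sum_{k\ge 0} 2^{k(\delta - d\vre)},
\]
which converges once $\delta<d\vre$, completing the argument.

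The Borel--Cantelli machinery here is entirely formal; the real input is Conjecture \ref{conj: main}, and within the derivation the only delicate point is the reduction that upgrades the single-scale purely periodic bound on $\til N(T)$ to the cumulative bound $N^*(T)=O(T^{d+\delta})$ on all reduced fractions with $q\le T$. This rests on the digit-shift observation that an eventually periodic element of $\CC$ becomes purely periodic after its pre-period is deleted, at the cost of passing to a proper divisor of $q$; the sum over pre-period lengths converges precisely because $3^{d}=2$. It is worth noting that the exponent gain $d\vre$ driving the convergent geometric series is exactly the slack provided by the $\vre$ in \equ{eq: approx}: at $\vre=0$ the series would diverge, mirroring the optimality of the exponent $d$ in Conjecture \ref{conj: main}.
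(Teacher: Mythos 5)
Your derivation is correct as a conditional proof of Conjecture~\ref{conj: mahler} from Conjecture~\ref{conj: main}, and it is essentially the route the paper intends: the paper itself contains no argument but defers the implication to \cite{BFR}, where the derivation is exactly this covering/Borel--Cantelli scheme, using the Ahlfors $d$-regularity of the coin-tossing measure ($\mu(B(x,r))\ll r^d$ since $3^d=2$) together with the cumulative counting bound $N^*(T)=O(T^{d+\delta})$, which follows from Conjecture~\ref{conj: main} via Proposition~\ref{basic}. Your treatment of the two points that need care --- upgrading $\til N(T)$ to $N^*(T)$ through the digit-shift/pre-period reduction, and intersecting over $\vre=1/n$ to get the statement for all $\vre>0$ simultaneously --- is sound, so there is nothing to add.
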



It was shown in 
\cite{BFR} 
that for each
$x \in \CC$, there are 
infinitely many $p/q \in \CC$ for which $|x - p/q| < q^{-1}(\log
q)^{1/d}$. Thus the exponent in \equ{eq: approx} cannot be
improved.  
\section{Notation, basic observations, and a lower bound}\name{section:
  notation}
The number $x = \sum_1^\infty a_i(x)3^{-i} $ is rational if and only if
the
sequence $(a_i(x))_{i \geq 1}$ is {\em 
  eventually periodic,} 
i.e. there are integers $i_0 = i_0(x)\geq 0$ and $\ell=\ell(x)>0$, called
respectively the {\em length of initial 
  block} and {\em period}, such that
\eq{eq: defining period}{
  a_i(x)= a_{i+\ell}(x), \ \ \mathrm{for \ all \ } i>i_0,}
and \equ{eq: defining period} does not hold for any smaller $i_0$ or $\ell$.
We say that $x$ is {\em purely periodic} if $i_0=0$. It is elementary
to verify the following (see also \cite[Lemma 2.3]{BFR}):

\begin{prop}\name{prop: rationals base 3}
Suppose $x$ is a rational in $\CC$, with $(a_i)$, $i_0$ and $\ell$ as above. Then we
may write $x = P/Q$ where 
$$P = \sum_{j=0}^{i_0} a_j 3^{i_0+\ell-j} - \sum_{j=0}^{i_0} a_j
3^{i_0-j} + \sum_{j=1}^\ell a_{i_0+j} 3^{\ell-j}, \ \ \mathrm{and} \ Q = 3^{i_0}(3^{\ell}-1)$$ 
(this fraction need not be reduced). In
particular:
\begin{itemize}
\item
  If $x$ is a rational in $\CC$ with period $\ell$ and initial block
  of length $i_0$, then 
  there is an integer $N$ such that 
  $3^{i_0}x-N$ is a purely periodic rational in $\CC$ with period $\ell$.  

\item
if $x = p/q$ is purely periodic
where
$\gcd(p,q)=1$, then $q$ is a divisor of $3^{\ell}-1$ and $\ell$ is the order
of $3$ in the multiplicative group $(\Z/q\Z)^{\times}$.  
\end{itemize}
  \end{prop}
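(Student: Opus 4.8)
The plan is to establish the displayed formula for $P/Q$ first, and then read off the two bullets from it. To obtain the formula, I would split the series $x=\sum_{i\ge 1} a_i 3^{-i}$ into the finite initial block $\sum_{i=1}^{i_0} a_i 3^{-i}$ and the tail $\sum_{i> i_0} a_i 3^{-i}$. Using $a_{i+\ell}=a_i$ for $i>i_0$, I would group the tail into consecutive blocks of length $\ell$ starting at position $i_0+1$: the index $i=i_0+k\ell+j$ contributes $a_{i_0+j}3^{-i_0}3^{-k\ell}3^{-j}$, so the tail equals $3^{-i_0}\bigl(\sum_{j=1}^\ell a_{i_0+j}3^{-j}\bigr)\sum_{k\ge0}3^{-k\ell}$. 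Summing the geometric factor $\sum_{k\ge0}3^{-k\ell}=3^\ell/(3^\ell-1)$ produces the common denominator $Q=3^{i_0}(3^\ell-1)$; placing both pieces over $Q$ and clearing powers of $3$ yields exactly the stated numerator $P$. The only work here is bookkeeping of the exponents.

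For the first bullet, I would multiply by $3^{i_0}$, which shifts the expansion left by $i_0$ places: $3^{i_0}x=\sum_{i=1}^{i_0}a_i 3^{i_0-i}+\sum_{j\ge1}a_{i_0+j}3^{-j}$. The first sum is a nonnegative integer, so taking $N$ to be that integer leaves $3^{i_0}x-N=\sum_{j\ge1}a_{i_0+j}3^{-j}$. Since $a_{i+\ell}=a_i$ for $i>i_0$, the sequence $(a_{i_0+j})_{j\ge1}$ is purely periodic, and its digits still lie in $\{0,2\}$, so $3^{i_0}x-N\in\CC$ is purely periodic; the minimality of $\ell$ as a period for the original tail forces the period of this new number to be exactly $\ell$.

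For the second bullet, pure periodicity means $i_0=0$, so the formula gives $Q=3^\ell-1$ and $x=P/(3^\ell-1)$. Writing $x=p/q$ in lowest terms and comparing, $p(3^\ell-1)=qP$ with $\gcd(p,q)=1$ forces $q\mid 3^\ell-1$, i.e. $3^\ell\equiv1\pmod q$; hence the order $e$ of $3$ in $(\Z/q\Z)^{\times}$ divides $\ell$. For the reverse divisibility I would use the equivalence, valid for every $m$, that $q\mid 3^m-1$ if and only if $x$ can be written as $P'/(3^m-1)$ for some integer $P'$: one direction clears denominators using $\gcd(p,q)=1$, the other multiplies numerator and denominator of $p/q$ by $(3^m-1)/q$. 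A representation $x=P'/(3^m-1)$ is precisely a purely periodic base-$3$ expansion whose period divides $m$, so the set of such $m$ is exactly the multiples of the minimal period $\ell$; on the arithmetic side the same set is exactly the multiples of $e$. Since these two sets of multiples coincide, $\ell=e$.

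The only genuinely delicate point is this last minimality step: one must exclude that $3$ has order a proper divisor of $\ell$ modulo $q$, which is exactly the assertion that $\ell$ is the \emph{minimal} period. This rests on the clean correspondence between periodic expansions of period dividing $m$ and fractions with denominator $3^m-1$, together with uniqueness of the base-$3$ expansion for these (purely periodic) numbers. Everything else is elementary manipulation of geometric series.
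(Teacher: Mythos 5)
Your proof is correct. Note that the paper itself gives no proof of this proposition---it declares it ``elementary to verify,'' citing \cite[Lemma 2.3]{BFR}---and your argument is precisely the intended standard verification: the geometric-series summation over blocks of length $\ell$ for the formula for $P/Q$, the digit shift $N=\sum_{i=1}^{i_0}a_i3^{i_0-i}$ for the first bullet, and for the second bullet the identification of $\{m: q \mid 3^m-1\}$ with the set of periods of the expansion, where the one delicate ingredient you flag (uniqueness of the base-$3$ expansion) indeed holds because $q\mid 3^\ell-1$ forces $\gcd(q,3)=1$, so for $q>1$ the number $p/q$ is not a triadic rational and has a unique expansion, while $q=1$ is a trivial edge case.
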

As mentioned above, throughout this paper, the notation $x=p/q$ will
mean that $x$ is a 
{\em reduced} rational in $\CC$, i.e. $\gcd(p,q)=1$. The notation $x =
P/Q$ will mean that $x$ is a rational in $\CC$, not necessarily
reduced.  

The following proposition follows from standard calculations and is
left to the reader. 

\begin{prop}\name{basic}
Fix $c, c' \in (0,1)$ and define $\til N(T)$ and $\til N'(T)$ using
$c$ and $c'$ respectively. Fix $\vre>0$.  If $\til N(T) \ll
T^{d+\vre}$ then the same holds for $\til N'(T), N(T), \til N^*(T)$,
and $N^*(T)$. 
\end{prop}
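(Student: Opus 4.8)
The plan is to prove a chain of implications relating the five counting functions, using the elementary structural facts from Proposition \ref{prop: rationals base 3}. The statement asserts that a polynomial bound $\til N(T) \ll T^{d+\vre}$ on the purely-periodic count in the window $I_T=[(1-c)T,T]$ propagates to the other four quantities. I will organize the argument around two principles: (1) replacing a windowed count by a cumulative count (i.e. $q\le T$ instead of $q\in I_T$) costs only a summation over dyadic-type scales, which is harmless for polynomial bounds; and (2) removing the ``purely periodic'' restriction costs only a factor accounting for the length $i_0$ of the initial block, which Proposition \ref{prop: rationals base 3} controls.

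First I would handle the dependence on the constant $c$. Fix $c,c'\in(0,1)$ and suppose $\til N(T)\ll T^{d+\vre}$ for the window parameter $c$. Covering the interval $[(1-c')T,T]$ by boundedly many windows of the form $[(1-c)S,S]$ with $S\le T$ — the number of windows needed is $O(\log T/\log\frac{1}{1-c})=O(\log T)$, which is absorbed into $T^{\vre}$ after shrinking $\vre$ slightly — gives $\til N'(T)\ll T^{d+\vre}$. The same covering, summing $\til N$ over a geometric sequence of scales $S=T,(1-c)^{-1}T,\dots$ down to $S=O(1)$, converts the windowed bound into the cumulative bound $\til N^*(T)=\#\{p/q\in\CC \text{ purely periodic}: q\le T\}\ll T^{d+\vre}$, since $\sum_k ((1-c)^k T)^{d+\vre}$ is a convergent geometric series dominated by its largest term $T^{d+\vre}$.

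Next I would pass from purely periodic rationals to arbitrary rationals in $\CC$, i.e. bound $N(T)$ and $N^*(T)$ by $\til N^*$. Here the key input is the first bullet of Proposition \ref{prop: rationals base 3}: every rational $x=p/q\in\CC$ with initial block of length $i_0$ and period $\ell$ satisfies the property that $3^{i_0}x-N$ is purely periodic with the same period, and the denominator of that purely periodic rational divides $3^\ell-1$, while $q$ itself divides $3^{i_0}(3^\ell-1)$. Thus a rational with $q\le T$ maps to a purely periodic rational whose denominator is at most $3^\ell-1$, and one can only have $q\le T$ when $3^{i_0}\le T$, forcing $i_0\le \log_3 T$. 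Summing over the $O(\log T)$ admissible values of $i_0$, and noting that the map from general rationals to purely periodic ones (by shifting off the initial block) is finite-to-one with multiplicity controlled by the choices of the initial block, I obtain $N(T)\ll (\log T)\cdot \til N^*(T')$ for a comparable scale $T'$, hence $N(T)\ll T^{d+\vre}$; the same bookkeeping gives $N^*(T)\ll T^{d+\vre}$.

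The main obstacle will be the bookkeeping in this last step: one must check that the correspondence between a general eventually-periodic rational and its purely-periodic ``tail'' does not collapse too many distinct rationals together, so that counting the tails genuinely dominates counting the originals up to the stated polynomial factor. Concretely, for each fixed purely periodic rational and each fixed $i_0\le\log_3 T$, the number of admissible initial blocks $(a_1,\dots,a_{i_0})\in\{0,2\}^{i_0}$ is $2^{i_0}\le T^{\log 2/\log 3}=T^d$, which is itself of the critical order $T^d$ and therefore threatens to produce an extra factor of $T^d$ rather than a harmless $T^\vre$. The resolution is that prepending an initial block multiplies the reduced denominator by roughly $3^{i_0}$, so a general rational with $q\le T$ and initial block length $i_0$ arises from a purely periodic rational with much smaller denominator $q'\lesssim T/3^{i_0}=T\cdot 2^{-i_0/d}$; substituting this into $\til N^*(q')\ll (q')^{d+\vre}$ and summing $\sum_{i_0} 2^{i_0}\cdot (T\,2^{-i_0/d})^{d+\vre}$ yields a geometric series in $i_0$ whose total is $O(T^{d+\vre})$, the $2^{i_0}$ growth being exactly cancelled by the $2^{-i_0(d+\vre)/d}$ decay. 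Verifying that this cancellation is exact, and that the implied reduction of the fraction $3^{i_0}(3^\ell-1)$ does not spoil the estimate, is the only delicate point; everything else is routine geometric summation.
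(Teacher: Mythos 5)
Your proof is correct; note, though, that there is no paper proof to compare it with --- the authors state only that this proposition ``follows from standard calculations and is left to the reader.'' Your argument is exactly the standard calculation they have in mind, and both essential mechanisms are present and correct: the geometric summation over scales $(1-c)^kT$, which converts the windowed hypothesis into $\til N^*(T)\ll T^{d+\vre}$ with the \emph{same} $\vre$ (this gives $\til N'(T)\le \til N^*(T)$ for free, so your parenthetical ``shrinking $\vre$ slightly'' is never needed --- strictly speaking it is not even allowed, since $\vre$ is fixed in the statement --- and covering $[(1-c')T,T]$ in fact takes only $O(1)$ windows, not $O(\log T)$); and the initial-block decomposition
$N^*(T)\le\sum_{0\le i_0\le\log_3 T}2^{i_0}\,\til N^*\bigl(T/3^{i_0}\bigr)$,
where $\sum_{i_0}2^{i_0}3^{-i_0(d+\vre)}=\sum_{i_0}3^{-i_0\vre}<\infty$ because $3^d=2$.

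The ``delicate point'' you flag does hold, and deserves to be written out rather than left as a caveat: if $p/q\in\CC$ is reduced with minimal initial block length $i_0$, then $q=3^{i_0}q'$ \emph{exactly}, where $q'$ is the reduced denominator of the purely periodic tail. Indeed $q'$ is coprime to $3$ (second bullet of Proposition \ref{prop: rationals base 3}), and the power of $3$ dividing $q$ cannot be some $s<i_0$: otherwise the fractional part of $3^{s}x$ would be a rational with denominator coprime to $3$, hence purely periodic, so the digits would satisfy $a_i=a_{i+\ell'}$ for all $i>s$, contradicting minimality of $i_0$. Consequently, for fixed $i_0$ the map sending $x$ to the pair (initial block, tail) is injective and sends denominators $q\le T$ to tail denominators $q'\le T/3^{i_0}$, which is precisely what your final summation needs. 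One last slip: the intermediate assertion $N(T)\ll(\log T)\,\til N^*(T')$ in your third paragraph is false as stated (the multiplicity is $2^{i_0}$, not logarithmic), but you identify and repair this yourself in the last paragraph, so the proof as a whole stands.
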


%

\begin{prop}\name{prop: lower bound} There is $c_1>0$ such that for
  all $T>3$ we have $\til N^*(T) \geq T^d/2$ and $N^*(T) \geq c_1 \log
  (T) \, T^d$. 
\end{prop}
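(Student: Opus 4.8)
The plan is to produce two explicit families of rationals in $\CC$ and count them, using Proposition \ref{prop: rationals base 3} to control the reduced denominators and the uniqueness of base-$3$ expansions in the digits $\{0,2\}$ to guarantee that the rationals produced are pairwise distinct.

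For the bound $\til N^*(T) \ge T^d/2$ I would work with a single period length. Set $\ell = \lfloor \log_3(T+1)\rfloor$, so that $3^\ell - 1 \le T$ and $\ell \ge 1$ for $T>3$. To each string $s \in \{0,2\}^\ell$ associate the purely periodic point $x_s \in \CC$ whose base-$3$ expansion is the repetition of $s$; by Proposition \ref{prop: rationals base 3} its reduced denominator divides $3^\ell - 1 \le T$, so $x_s$ is counted by $\til N^*(T)$. Distinct strings have distinct periodic expansions, and since points of $\CC$ have unique expansions in the digits $\{0,2\}$, the $x_s$ are pairwise distinct. Hence $\til N^*(T) \ge 2^\ell$, and since $\ell > \log_3 T - 1$ and $d = \log 2/\log 3$, we get $2^\ell > \tfrac12\, 2^{\log_3 T} = \tfrac12 T^d$.

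To gain the extra factor $\log T$ for $N^*(T)$ I would introduce initial blocks and stratify the reduced rationals $p/q \in \CC$ by $i_0 := v_3(q)$, the $3$-adic valuation of $q$, which equals the length of the initial block. Given a purely periodic $y = p'/q' \in \CC$ (so $\gcd(q',3)=1$, automatically, as $q'\mid 3^{\ell'}-1$) and digits $a_1,\dots,a_{i_0}\in\{0,2\}$, set $x = \sum_{j=1}^{i_0} a_j 3^{-j} + 3^{-i_0}y \in \CC$, an eventually periodic point with initial block of length $i_0$. Writing $x = \mathrm{num}/(3^{i_0}q')$ with $\mathrm{num} = q'\sum_{j=1}^{i_0} a_j 3^{i_0-j} + p'$, one checks $\mathrm{num} \equiv q'a_{i_0}+p' \pmod 3$ and $\mathrm{num}\equiv p' \not\equiv 0 \pmod r$ for every prime $r\mid q'$. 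Thus the prime-to-$3$ part $q'$ never cancels, and since $a_{i_0}=0,2$ produce the two distinct residues $p',\,2q'+p' \bmod 3$, at least one choice of $a_{i_0}$ makes $\mathrm{num}$ prime to $3$; for that choice $x$ has reduced denominator exactly $3^{i_0}q'$.

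Fixing such a valid $a_{i_0}$ and letting $a_1,\dots,a_{i_0-1}$ range freely gives $2^{i_0-1}$ distinct rationals $x$ of denominator $3^{i_0}q'\le T$ for each purely periodic $y$ with $q'\le T/3^{i_0}$. These are pairwise distinct across all parameters, because $x$ determines $i_0=v_3(q)$, then its first $i_0$ digits, then $y$; in particular different strata $i_0$ cannot overlap. The number of admissible $y$ is $\til N^*(T/3^{i_0}) \ge \tfrac12 (T/3^{i_0})^d$ by the first part, provided $T/3^{i_0}>3$. Using the identity $3^d = 2$, the $i_0$-th stratum contributes at least $2^{i_0-1}\cdot \tfrac12 (T/3^{i_0})^d = 2^{i_0-1}\cdot \tfrac12\, T^d\, 2^{-i_0} = T^d/4$, a bound independent of $i_0$; summing over $i_0 = 1,\dots,\lfloor \log_3 T\rfloor - 2 \asymp \log T$ yields $N^*(T) \gg (\log T)\,T^d$. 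The one step demanding genuine care is the denominator bookkeeping of the previous paragraph: a priori reduction of $\mathrm{num}/(3^{i_0}q')$ could shrink the denominator, destroying both the bound $q\le T$ and the clean stratification by $v_3(q)$, so the crux is exactly the verification that neither the power of $3$ (handled by the choice of $a_{i_0}$) nor the factor $q'$ (automatic, since $\mathrm{num}\equiv p'\pmod{q'}$ with $\gcd(p',q')=1$) can cancel. Everything else is routine given Proposition \ref{prop: rationals base 3} and uniqueness of base-$3$ expansions.
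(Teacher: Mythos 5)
Your proof is correct, and its overall skeleton matches the paper's: the first part is essentially identical (all $\{0,2\}$-blocks of length $\ell\approx\log_3 T$ repeated periodically give $2^\ell\geq T^d/2$ purely periodic points with denominator dividing $3^\ell-1$), and the second part, like the paper's, gains the factor $\log T$ by summing over $\asymp\log_3 T$ choices of initial-block length $i_0$, each stratum contributing $\gg T^d$. Where you genuinely diverge is in how distinctness of the counted rationals is certified --- which is exactly the step the paper does not carry out: the paper counts unreduced fractions $P/Q$ with $Q=3^{i_0}(3^{\ell-i_0}-1)$, with $2^{i_0}$ initial blocks and $2^{\ell-i_0}$ periodic blocks, and then asserts that ``an exercise involving the inclusion/exclusion principle (which we omit)'' shows the repetitions are negligible. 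You instead sacrifice a factor of $2$ per stratum by fixing the last initial-block digit $a_{i_0}$ so that the numerator stays prime to $3$ (while $\mathrm{num}\equiv p'\pmod{q'}$ with $\gcd(p',q')=1$ rules out cancellation of $q'$), so that every rational you produce has reduced denominator \emph{exactly} $3^{i_0}q'$; the stratification by the $3$-adic valuation $v_3(q)$ then makes injectivity across strata immediate, and uniqueness of $\{0,2\}$-expansions gives injectivity within a stratum and in the parameter $y$. The trade-off: the paper's count is shorter and captures a constant factor more points, but leaves the crux as an omitted exercise; yours is fully self-contained, replaces inclusion/exclusion by exact denominator bookkeeping, and reuses the first part as a black box to count the periodic tails. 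One cosmetic point: for $T$ close to $3$ the range $1\leq i_0\leq\lfloor\log_3 T\rfloor-2$ is empty, so the constant $c_1$ must absorb small $T$ via $N^*(T)\geq\til N^*(T)\geq T^d/2$; this is routine but worth a sentence.
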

\begin{proof}
Let $\ell = \lfloor \log_3 T \rfloor \geq 1$, i.e. $T \in [3^{\ell},
  3^{\ell+1}]$. There are $2^\ell$ 
purely periodic Cantor rationals of the form $P/Q$ with $Q =
3^{\ell}-1$. Bringing them to reduced form, they are of the form $p/q$
with $q \leq T$. In particular 
$$\til N^*(T) \geq 2^{\ell}  = \left(3^{\ell+1} \right)^d /2 \geq
T^d/2.$$

Similarly any rational of the form $P/Q$ where $Q = 3^{i_0}(3^{\ell -
  i_0}-1)$ will contribute to $N^*(T)$. For each such $Q$, there are
$2^{\ell - i_0}$ possibilities for the digits in the periodic part
of $P/Q$, and $2^{i_0}$ for the digits in the initial block. An
exercise involving the inclusion/exclusion principle (which we omit),
implies that the repetition in this counting is negligible, i.e., up
to a constant,
the number of
distinct rationals $P/Q$ written in this form is at least $ \ell \,
2^{\ell}.$ This proves the claim.
\end{proof}
\section{The heuristic}\name{section:
  heuristic}
In this section we justify 
an upper bound of the form $\til N(T) = O(T^{d+\vre})$. 
Our approach is to assign to each reduced rational $p/q$
a probability that it belongs to $\CC$, and bound the expectation of
the random variable $\til N(T)$ with respect to this
probability. 
Let 
$Q= 3^{\ell}-1$ and consider the rationals $P/Q$ in the interval
$[0,1]$. There are $3^\ell$ such rationals, and of these, $2^{\ell}$
  belong to $\CC$. By Proposition \ref{prop: rationals base 3}, they
  are precisely the purely periodic 
  Cantor 
rationals with period dividing $\ell$. 
That is, fixing $Q$, the proportion of rationals $P/Q \in[0,1]$ which belong to $\CC$ is
$\left(\frac{2}{3}\right)^{\ell}$.

Motivated by this we define our probabilistic model. 
By Proposition \ref{prop: rationals base 3}, $p/q \in \CC$ is purely periodic if
and only if $3$ does not divide $q$.  
For each rational $p/q \in [0,1]$, with $q$ not divisible
by $3$, our model stipulates:

\medskip

(*) {\em 
 The probability that $p/q \in \CC$
is $\left(\frac23 \right)^{\ell}$, where $\ell = \ell(q)$ is the smallest number
for which $Q = 3^\ell-1$ is divisible by $q$; the events $p/q \in \CC$
are completely independent.}

\medskip 

Note that $\ell$ is the order of $3$ in the multiplicative group 
$C_q \df (\Z/q\Z)^\times$. 
 Let $\phi(q) = \# \, C_q$ be the Euler number of $q$. 
We may take representatives of elements of $C_q$
to be the integers $p $ between $0$ and $q-1$ coprime to
$q$, so we find that 
the expected number of $p/q$ in $\CC$ with fixed denominator $q$
  is $\phi(q)\left(\frac23\right)^{\ell(q)}$. 
Thus:
\eq{eq: first expression}{
\begin{split}
\mathbb{E}\left(\til N(T)\right) & = \sum_{q \in I_T}\phi(q)\left(\frac23\right)^{\ell(q)} \\
& \leq \sum_{q \in I_T} T \left(\frac23\right)^{\ell(q)} \\
& = T \, \sum_{\ell \geq \log_3 T + c'} 
\# \, L(\ell, T) 
\left(\frac23\right)^{\ell},
\end{split} 
}
where 
$$L(\ell, T) = \left\{q \in I_T : \ell(q)=\ell \right\}
    \ \ \mathrm{ and } \ \
c' = \log_3 (1-c).$$
We now need to bound the terms $\# \, L(\ell, T)$. 
First we choose $\lambda = \frac{2-d}{1-d}$. For $\ell \geq \lambda\log_3T $ we
can use the trivial bound $\# \, L(\ell, T) \leq T$, since 
$$
T^2 \sum_{\ell \geq \lambda \log_3 T } 
\left(\frac23\right)^{\ell} \asymp
T^{2-\lambda+ \lambda d}= 
 T^d.
$$
So it only remains to show 
\eq{eq: remains}{
 T \, \sum_{\ell = \log_3 T+c'}^{\lambda \log_3 T } 
\# \, L(\ell, T) 
\left(\frac23\right)^{\ell}  = O(T^{d+\vre}).
}
For $\ell \in [\log_3T + c',\lambda \log_3T ]$, we use
the obvious inequality $\# \, L(\ell, T) \leq \tau\left(3^{\ell}-1\right)$,
where $\tau(n)$ denotes the number of divisors of $n$. 
It is well-known 
that 
\eq{eq: tau bound}{
\tau(n) \leq
2^{(1+o(1))\log n/\log\log n}. 
}
In our situation we have $3^{\ell}-1 \leq T^{\lambda}$, so  
$$\tau\left(3^{\ell}-1\right) \leq 2^{2 \lambda \log T /\log \log T}
= T^{2\lambda/\log \log T},
$$
implying
\[
\begin{split}
 T \, \sum_{\ell = \log_3 T+c'}^{\lambda \log_3 T } 
\# \, L(\ell, T) 
\left(\frac23\right)^{\ell}  & \leq T \lambda \log_3 T  T^{2\lambda/\log
  \log T}\left(\frac23\right)^{\log_3T+c'} \\
& \ll \log T \, T^{d+2\lambda/\log\log T}.  
\end{split}
\]
from which \equ{eq: remains} follows.

\ignore{
, and this will require
another heuristic argument. Note that understanding 
precisely the behavior of the function $\ell(q)$ is a 
notoriously difficult problem (cf. Artin's
conjecture).

However for our 
purposes, a simple heuristic bound suffices. Clearly $q$ cannot divide
$3^{\ell}-1$ if $\ell < \log_3q$. Otherwise, we expect $3^{\ell}-1 \mod q$ to be 
uniformly distributed in the additive group $\Z/q\Z$, so the
probability that $q$ divides $
3^{\ell}-1$ is $1/q$. We find that the expected number of $q \in I_T$
for which $q | 3^{\ell}-1$ is $\frac{cT}{q} \leq \frac{c}{1-c}$.

Reworking this into \equ{eq: first expression} gives 
\eq{eq: gives}{
\begin{split}
 & \sum_{q \in I_T}\phi(q)\left(\frac23\right)^{\ell(q)}  \leq  \sum_{q \in
I_T} T \left(\frac23\right)^{\ell(q)} \\
& = 
T \, \sum_{\ell \geq \log_3 T + c_1} \left| \left\{q \in I_T : \ell(q)=\ell \right\}
   \right| \left(\frac23\right)^{\ell} \\  
& \leq T \, \sum_{\ell \geq \log_3 T + c_1} \left| \left\{ q \in I_T : q | 3^{\ell}-1 \right\}
   \right| \left(\frac23\right)^{\ell} \\  
& \leq  T \, \sum_{\ell \geq \log_3 T + c_1} c_2
   \left(\frac23\right)^{\ell} = 3 c_2 T \left(\frac23\right)^{\lfloor
     \log_3 T + c_1 \rfloor} \leq c_3 T^d, 
\end{split}
}
where 
$$c_1 = \log_3 1-c, \ c_2 = \frac{c}{1-c}, \ c_3 = 3c_2\left(\frac23 \right)^{c_1-1}.$$ 

}
\begin{remark}\name{remark: better}
1. In \equ{eq: first expression} we used the inequality $\phi(q) \leq
q \leq T$. But in fact it is well-known that on average $\phi(q) \asymp q$, 
so we actually expect
\eq{eq: actually}{\til N(T) \asymp  T \, \sum_{\ell = \log_3 T + c'}^{\lambda \log_3T}
\# \, L(\ell, T) 
\left(\frac23\right)^{\ell}.}

2. 
Our arguments show that the right hand side of \equ{eq: actually}
behaves like $O(\log T \, 
\, T^{d+2\lambda/\log\log T})$. In
estimating the cardinality of $L(\ell,T)$ we used the bound \equ{eq: tau bound} which is
optimal for a general $n$. However it may be that for numbers of the
form $n=3^{\ell}-1$ a better bound exists, see \cite{Erdos} for
related results. If so then our heuristic would predict a better bound
for $\til N(T)$.

\end{remark}

\section{A revised model}
\name{section: corrections}
The heuristic above relied on the basic statement (*). However this
assumption leads to some clearly
incorrect predictions, namely: 
\begin{itemize}
\item[(i)] {\em (Primitive words)}
In deriving (*) we calculated the frequency of purely periodic rationals
with period {\em dividing} $\ell$, belonging to $\CC$. It would have
been more precise to count the purely periodic rationals with period 
{\em exactly} $\ell$, belonging to $\CC$. By Proposition \ref{prop:
  rationals base 3}, rationals with 
period exactly $\ell$ correspond to {\em primitive} words $w$ in the alphabet $\{0, 1,
2\}$ of length $\ell$, i.e. those $w$ for which there is no proper
divisor $k$ of $\ell$ such that $w$ is a concatenation of a identical
words of length $k$. A standard 
application of the inclusion/exclusion principle
gives that the number
of primitive words of length $\ell$ from an alphabet of size $a$ is
\eq{eq: mobius}{m(\ell, a) \df \sum_{d \mid \ell}
  \mu\left(\frac{\ell}{d}\right) a^d,
}
where $\mu$ is the M\"obius
function. 

\item[(ii)]{\em (Multiples of $\ell$)}
Fix $q$ and let
\eq{eq: defn nq}{
N_q = \# \{p: p/q \in \CC\},
}
and let $\ell = \ell(q)$. Since $\CC$ is invariant under multiplication by
$3$ mod 1, whenever $p/q \in \CC$ we also have $p'/q \in \CC$, where
$p' = 3p \mod 1$. This means that the set $\{p: p/q \in \CC\}$
consists of orbits for the action of $3$ on $C_q$, and in particular,
$\ell$ divides $N_q$. 

\item[(iii)]{\em (Divisibility by 2)}
Let $Q=3^{\ell}-1$. Our model predicts that there are $\phi(Q)(2/3)^\ell$ rationals
in $\CC$ with denominator $Q$, coming from $P \in \{0, \ldots, Q-1\}$ such that
$P/Q$ belongs to $\CC$ and $\gcd(P,Q)=1$. However $Q$ is even and if
$P/Q$ is in $\CC$ then so is $P$, since it may be written in base 3 using
the letters 0 and 2 only. That is, the actual number is zero. A
similar observation holds for any $q$, which divides $Q=3^{\ell}-1$
but does not divide
$Q/2$. 

\end{itemize}

One may define a revised model as follows: 
for each $q$, 
let $H$ be the group generated by $3$ in $C_q$. 
By observation (ii), for each coset $X \in C_q / H$, all
number of the form $p/q, p\in X$ simultaneously belong or do not
belong to $\CC$; if they all do, we will write $X \in \CC$. With this
notation, our revised model stipulates that:

\medskip

(**) {\em Suppose $q$ is not divisible by 3 and divides $(3^{\ell}-1)/2$,
where $\ell = \ell(q).$ For each $X \in C_q / H$, the probability that
$X \in \CC$ is 
$\frac{m(\ell,2)}{\bar{m}(\ell,3)}$, where $m(\ell, a)$ is defined by
\equ{eq: mobius} and $\bar{m}(\ell, a)$ is the set of primitive words
of length $\ell$ in the symbols $\{0,1,2\}$ defining even numbers. }

\medskip

Note that our choice of probability takes into account (i) and (iii). 
It is not hard to show that 
$$
\frac{m(\ell,2)/\bar{m}(\ell,3)}{(2/3)^{\ell}} \to_{\ell \to \infty} 2,
$$
and using this, that the arguments given in \S\ref{section: heuristic} also apply to the
second model, yielding the same prediction. That is, model (**) also
implies Conjecture \ref{conj: main}. Moreover, when $\ell$ is prime,
it is easy to check using \equ{eq: mobius} and the definition of
$\bar{m}$ that the difference between $2 \left(\frac23 \right)^{\ell}$ and
$\frac{m(\ell,2)}{\bar{m}(\ell, 3)}$ is negligible. 
Nevertheless, when testing our heuristic, there will be a 
difference between models (*) and (**). 
For sufficiently small values
of $q$ we have computed the actual
values of $N_q$ as defined in \equ{eq: defn nq}, and one may compare
them to the number  
\eq{eq: def MLO}{
\MLO(q) \df \mathrm{round}\left (\frac{\phi(q) \cdot m(\ell,2)}{\bar{m}(\ell, 3)} \right).
}
See 
Figures~\ref{c05_total} and~\ref{c05_ratio}.

\begin{figure}[!ht]
\centering
\includegraphics[width=\textwidth]{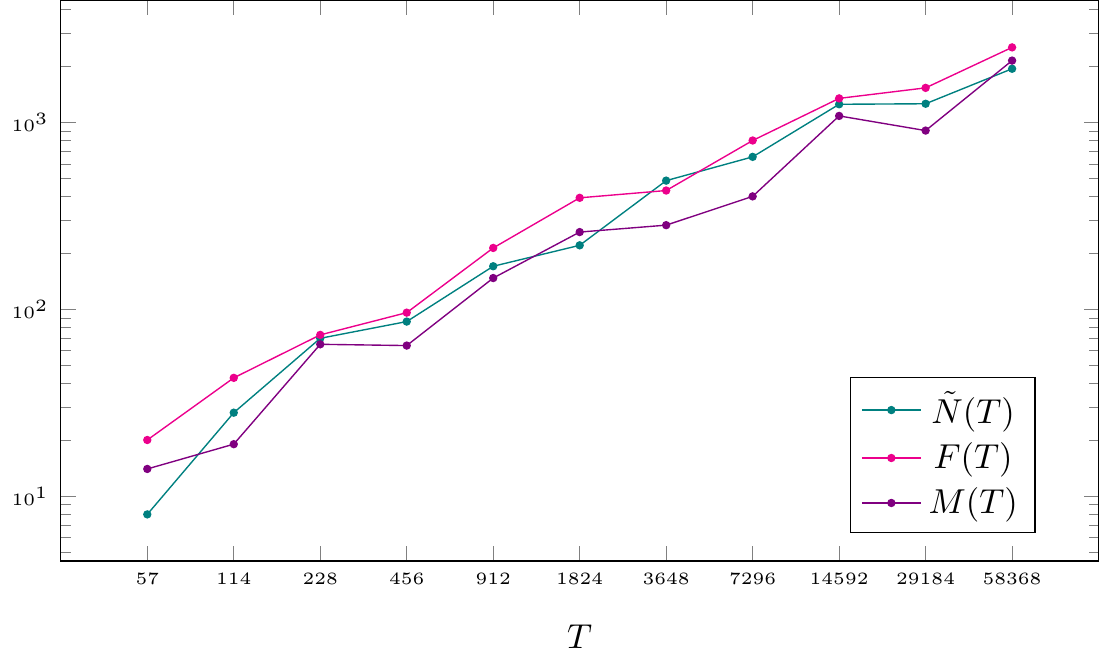}
\caption{The summed number of purely periodic Cantor rationals $\tilde{N}(T)$, 
its approximation $F(T) := \sum_{\stackrel{q \in I_T}{3 \nmid q}} \text{round}\left(\left(\frac{2}{3}\right)^{\ell(q)} \cdot 2 \cdot \phi(q)\right)$ from model (*),
and its approximation $\displaystyle{M(T) := \sum_{\stackrel{q \in I_T}{3 \nmid q|\frac{3^{\ell(q)}-1}{2}}}
\MLO(q)}$ from model (**), where $I_T := [(1-c)T,T]$ for
$c=\frac{1}{2}$. 
More data points shown in Figure \ref{various_c}.} 
\label{c05_total}
\end{figure}

\begin{figure}[!ht]
\centering
		\includegraphics[width=\textwidth]{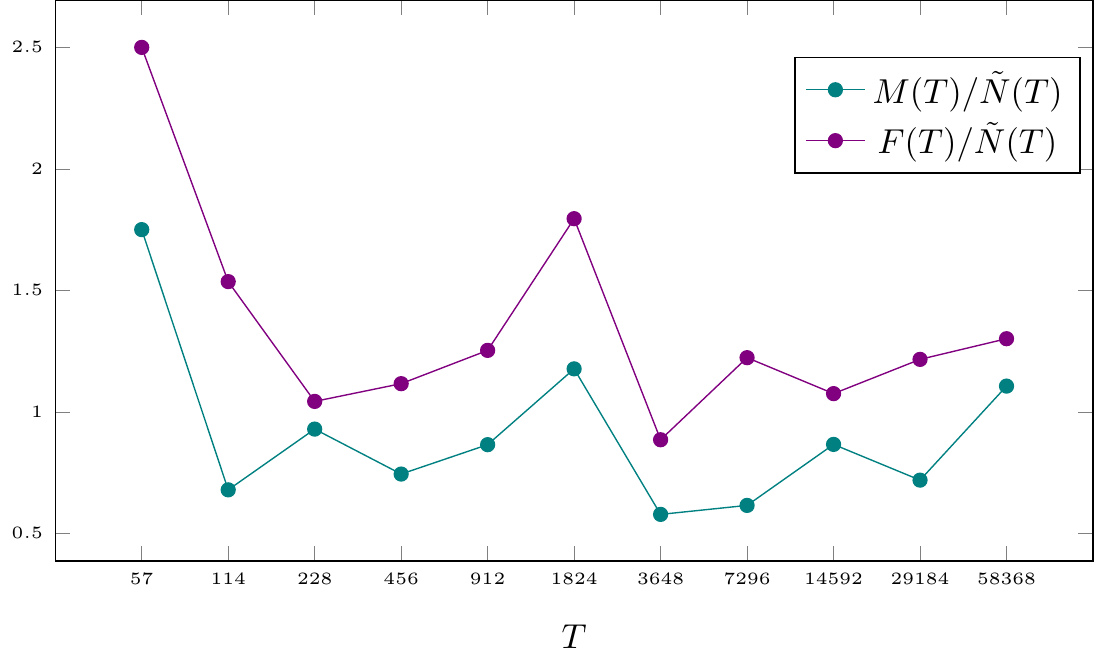}
\caption{Ratios $\frac{M(T)}{\tilde{N}(T)}$ and $\frac{F(T)}{\tilde{N}(T)}$ for
  $c=\frac{1}{2}$. 
  Our heuristic predicts that this graph
  tends to 1 at infinity.} 
		\label{c05_ratio}
\end{figure}



The notation $\mathrm{round}(x)$ stand for the closest integer to $x$,
and the letters MLO stand for {\em most likely
  outcome}, since there is no other number more likely to occur as the
value of $N_q$, under probabilistic model (**). 

Using inclusion/exclusion and M\"obius
inversion, one can show (for more details see \cite{Trauthwein}) that the number
of even (as numbers in base 3) 
primitive words of length $\ell$ with symbols in the alphabet
$\{0,1,...,a-1\}$ is
$$\sum_{d | \ell, \, \frac{\ell}{d} \text{ even}}
\mu\left(\frac{\ell}{d}\right) a^d + \sum_{d | \ell, \frac{\ell}{d} \text{
    odd}}  \mu\left(\frac{\ell}{d}\right)  \left \lceil \frac{a^d}{2} \right
\rceil 
.$$

As a consequence one obtains a simple formula for
$\bar{m}(\ell, 3)$. 
This allows us to compute MLO$(q)$ and hence to plot
Figures~\ref{c05_total} and~\ref{c05_ratio}. As can be seen in the
Figures,  
within the range of our database of Cantor rationals,
{\em both models (*) and (**) give good approximations for the number of
purely periodic Cantor rationals.} The fit is not perfect though, and
the plots reveal other interesting features. We try to explain some of
these below.


%


\section{Remarks on fluctuations, Bourgain's
  theorem, and symmetries}\name 
{section: fluctuations}

\begin{table}
$$ \begin{array}{|r|l|}
\hline
 q_n & \ell(q_n)/\log_3q_n \\
 \hline
q_0=  3 & 1.0\\
q_1= 30 & 1.292030029884618\\
q_2= 84 & 1.4876881693076203\\
q_3=146 & 2.6453427135663814\\
q_4=386 & 2.951356044207975\\ \hline
 \end{array}$$
\caption{Denominators $q_n$ such that for all $q<q_{n+1}$
admitting Cantor rationals of denominator $q$, 
$\ell(q)/\log_3q \leq \ell(q_{n})/\log_3q_{n}$.
For all $q<3^{10}$
admitting Cantor rationals of denominator $q$, we have
$\ell(q)/\log_3q \leq \ell(q_{4})/\log_3q_{4}$.
}
  \label{Bourgain-table}
\end{table}
 \begin{figure}
 \centering
    \begin{subfigure}[b]{0.3\textwidth}
        \includegraphics[width=\textwidth]{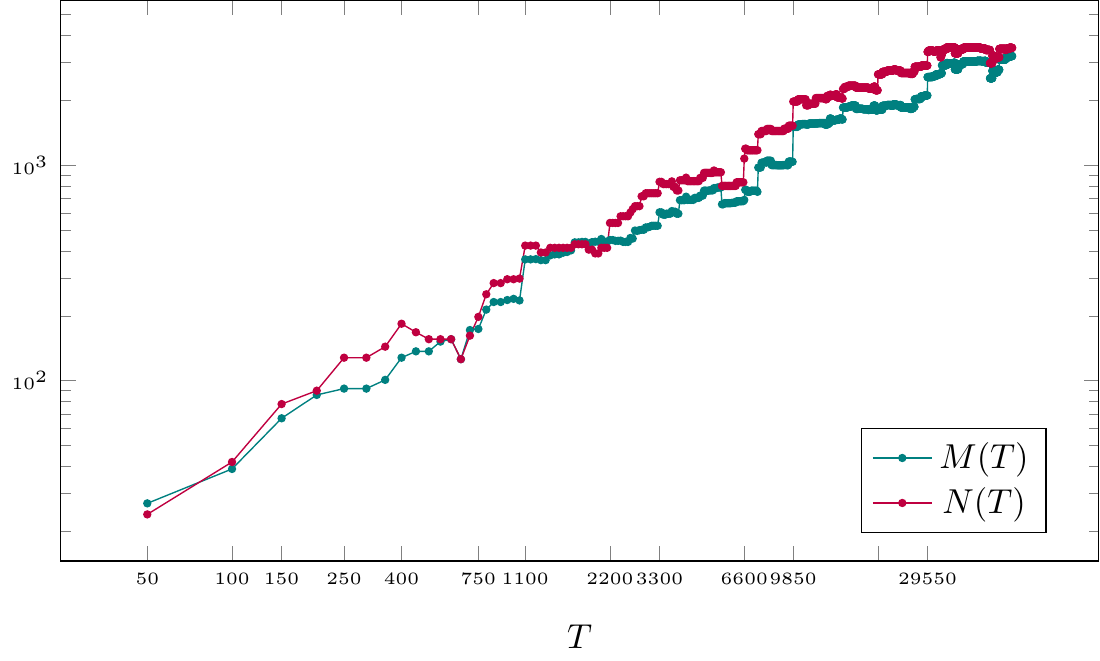}
        \caption{$c =0.8$}
        \label{0dot2}
    \end{subfigure}
    \begin{subfigure}[b]{0.3\textwidth}
        \includegraphics[width=\textwidth]{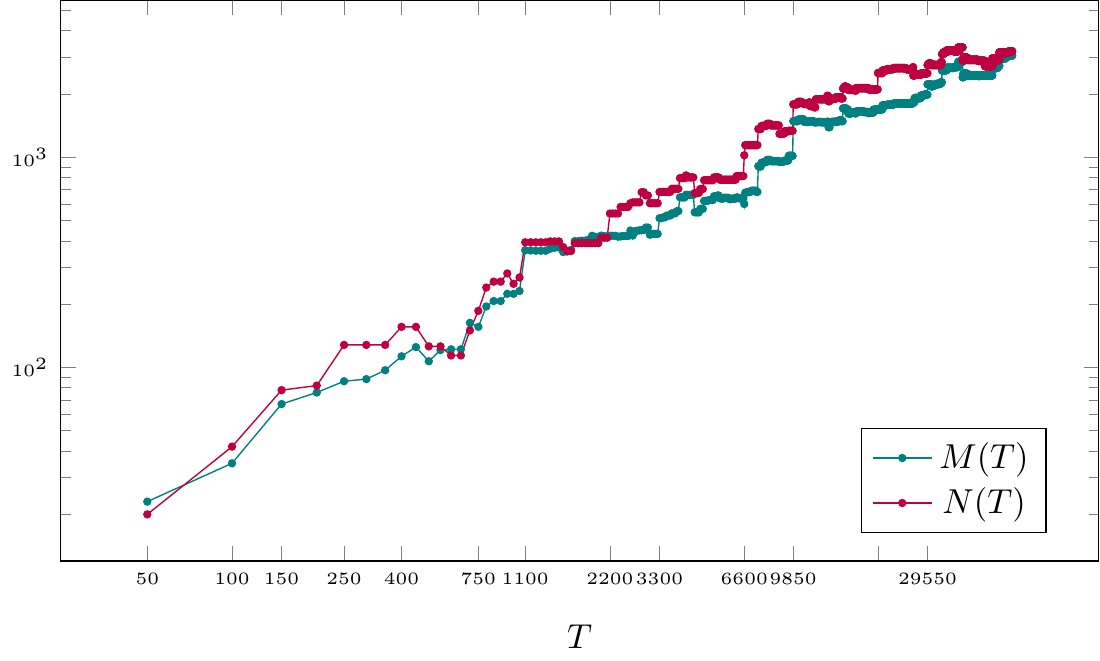}
        \caption{$c =0.75$}
        \label{0dot25}
    \end{subfigure}
    \begin{subfigure}[b]{0.3\textwidth}
        \includegraphics[width=\textwidth]{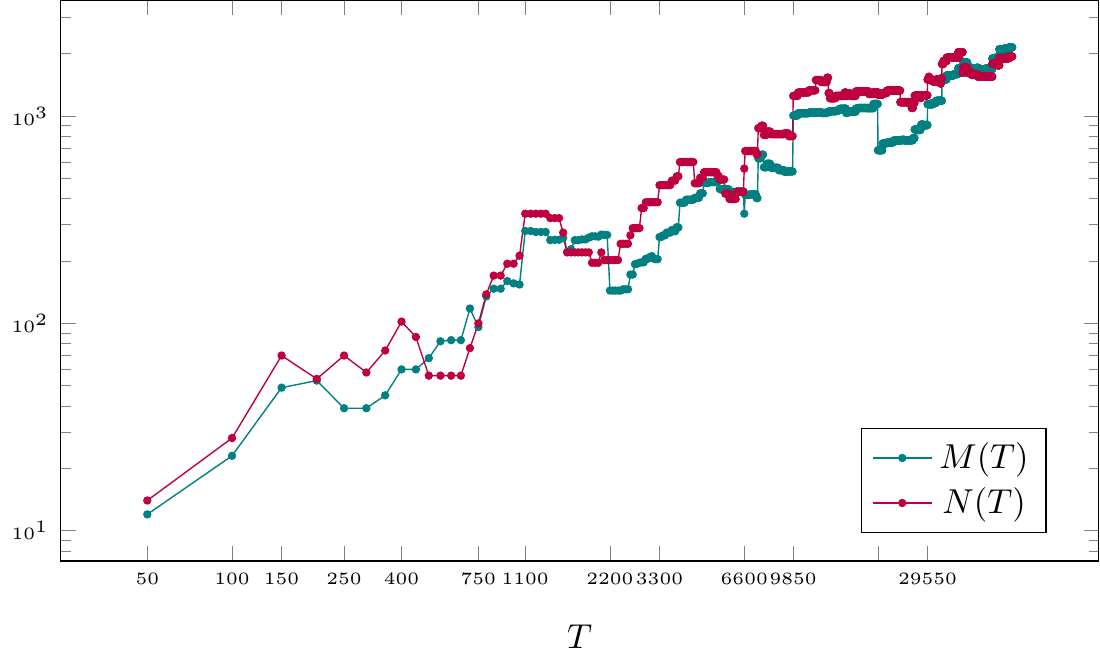}
        \caption{$c =0.5$}
        \label{0dot5}
    \end{subfigure}
    
     \begin{subfigure}[b]{0.45\textwidth}
        \includegraphics[width=\textwidth]{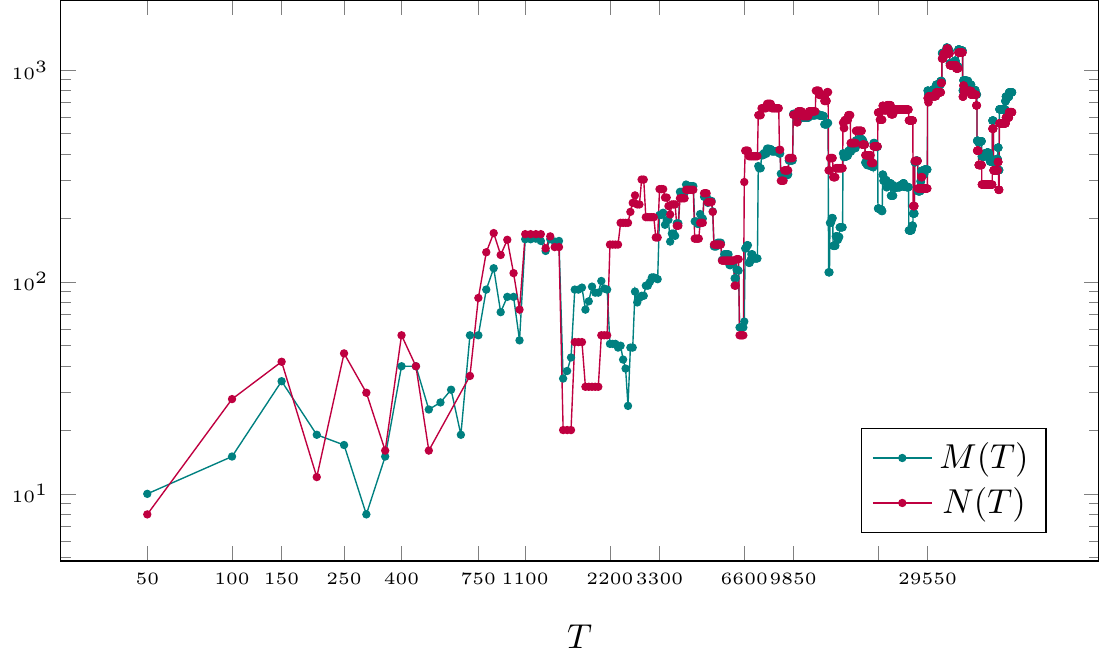}
        \caption{$c =0.25$}
        \label{c=0dot25}
    \end{subfigure}
    \begin{subfigure}[b]{0.45\textwidth}
        \includegraphics[width=\textwidth]{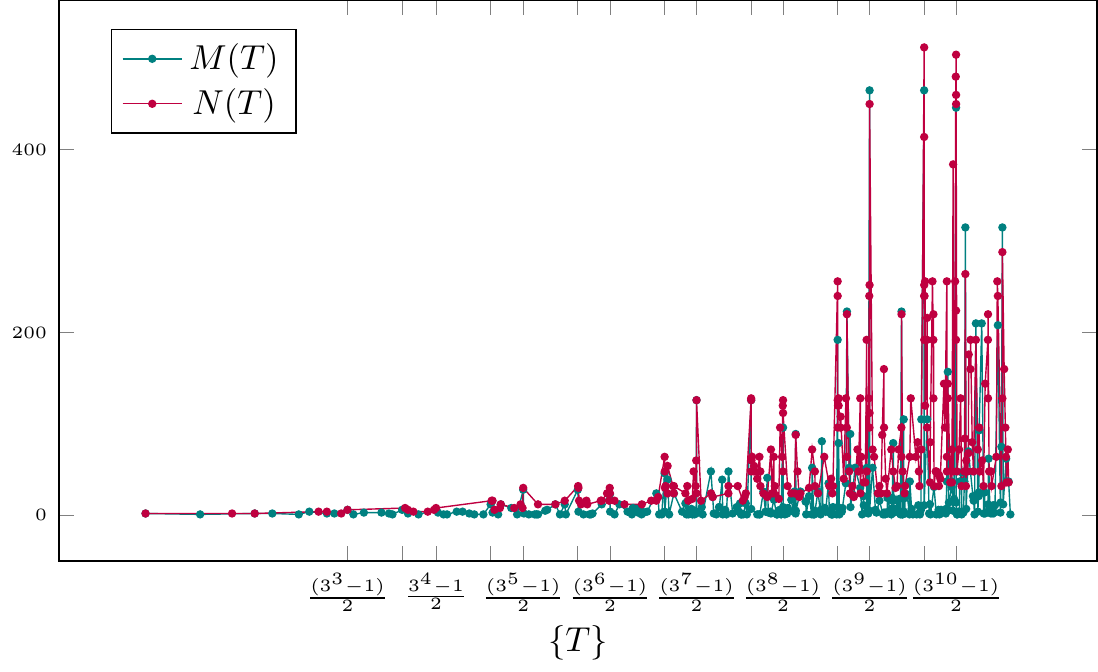}
        \caption{$c =0$}
        \label{c=0}
    \end{subfigure}   
   \caption{For different values of $c$ (which determine the intervals
     $I_T := [(1-c)T,T]$), we plot the summed number of purely periodic Cantor rationals $\tilde{N}(T)$
and its approximation $M(T) $ 
from model (**). As predicted in \S \ref{subsec:
  fluctuations}, there are  more fluctuations for smaller $c$.}\label{various_c}
   \end{figure}
\begin{table}
$$
 \begin{array}{|l|c|c|c|c|c|
}
  \hline
r & q = 3^r+1           & N_q 
& {\rm MLO}(q) & \frac{N_q}{{\rm MLO}(q)} & \left(\frac{2}{3} \right)^r \frac{N_q}{{\rm MLO}(q)} 
   \\ 
\hline &&&&&
   \\
   4 &82  & 16  & 3   & 5.333 & 1.053 \\
   5 &244 & 30 &4    &7.5 & 0.988\\
   6 &730  &48 &4 &12 & 1.053 \\
   7 & 2188 &126 & 7&18 &1.053 \\
   8 & 6562& 240 & 9  & 26.667 & 1.04 \\
   9 &19684 & 414 &11  & 37.636 & 0.979\\
   10 & 59050& 820 & 14& 58.571 & 1.016 \\
   11 & 177148& 2024& 23 & 88 & 1.017\\
    12&   531442 & 4008 & 31 & 129.29 &  0.996 \\
 13 &  1594324  & 8190 & 42  & 195 &  1.002\\ 

\hline
\end{array}
$$
\caption{The numbers $q = 3^r+1, \  r = 4, \ldots, 13$ where our
  heuristic gives poor 
  predictions. When revising the
  prediction by a factor of 
  $(3/2)^r$, which is the factor taking into account a symmetry
  $\omega \mapsto \omega \bar{\omega}$, we obtain a much better prediction. }
\label{symmetries1}
\end{table}

\begin{table}
$$
 \begin{array}{|l|c|c|c|c|
}
  \hline
q           &\ell(q)& N_q 
& {\rm MLO}(q) & \frac{N_q}{{\rm MLO}(q)} 
   \\ 
\hline &&&&
   \\
 12962 & 24 & 72 & 1 & 72 \\
     14965 & 24 & 48 & 1 & 48 \\
    29848 & 24 & 48&1 &  48 \\
    84253 & 24 & 96 & 9 & 10.391 \\ 
  129620 & 24 & 48 & 6 & 8 \\
   181468 & 24 &96  & 9 & 10.391\\
239440 & 24 & 96 & 11 &  8.727 \\ 
259240 &24  & 48& 12 &  4 \\ 
 298480 & 24 & 96 & 11 & 8.727 \\ 
   531442 & 24 & 4008 & 31 & 129.29\\
   589771 & 24& 336 & 55 & 6.109 \\ 
4731130 & 24 & 960 & 222 & 4.324 \\
   21257680 & 24 & 4176 & 985 & 4.24\\
\hline
\end{array}
$$
\caption{All values of $q$ with $\ell(q)=24$ for which our heuristic
  makes a prediction which is incorrect by a factor of 4 or
  more. Note that in all of these examples, $N_q>
  \mathrm{MLO}(q)$. At least three, and probably all,  of the entries
  in the table are related to 
  the 
symmetries discussed in \S \ref{subsec: symmetries}.}\label{symmetries} 

\end{table}

\begin{table}
  $$
   \begin{array}{|l|c|c|c|c|
     }
     \hline
q           &\ell(q)& N_q 
& {\rm MLO}(q) & \frac{N_q}{{\rm MLO}(q)} 
   \\ 
     \hline &&&&
     \\ 23 & 11  &  0 & 0 & - \\
     47 & 23 &  0 & 0 & -
                     \\
683           & 31  &   0 
   &       0      &          -          
\\ 
1597          & 19  &   0 
   &       1      &          0          
     \\ 
1871          & 17  &   0 
 &       4      &          0      
 \\ 
3851          & 11  &  88 
 &      89      &                      0.989   
\\ 28537         & 29  &   0 
 &       0      &          -             
\\ 34511         & 17  &  68 
 &      70      &               0.971      
\\ 102673        & 31  &   0 
 &       1      &          0              
\\
363889        & 19  & 304 
  &     328      &    0.927                   
  \\ 
59\cdot 28537 & 29  &   0 
 &      26      &          0              
 \\ 
4404047       & 31  &  62 
 &     31       &     2                    
\\ 20381027 & 29 & 232 
& 319 & 0.727 
\\ 1001523179 & 23 & 178480 & 178481 & 0.999994 \\
 \hline
\end{array}
$$
\caption{Some numbers $q< \frac{3^{\ell(q)}-1}{2}$ for which
  $\ell(q) $ is 
  a prime, 
  including all such $q$ with $11 \leq \ell(q) \leq 23$.
  In this case, symmetries are impossible and our heuristic works well
for each individual $q$.}
 \label{validatingExperiment}
 \end{table}

\subsection{Deviations from the mean}
An obvious objection to the line of reasoning presented
above, is that our prediction for $\til N(T)$ is
based on bounds on its {\em expectation.} That is, we have shown that
our heuristic implies
$\mathbb{E}(\til N(T))=O(T^{d+\vre})$, but in order to justify $\til
N(T)=O( T^{d+\vre})$ one needs additional arguments, which we now
briefly indicate. 

If for some $\vre>0$ there is an unbounded sequence of $T$ for which
$\til N(T) \geq T^{d+\vre}$, then (possibly modifying the constants 
$\vre$ and $c$) we can take this to be a subsequence of the numbers in the form $T_k =
(1+c)^k$. For each $k$ we let $X_k$ denote the random
variable, in model (*), counting the number of $p/q \in \CC$ with $q
\in I_{T_k}$. We will show that the probability
that $X_k$ exceeds $T_k^{d+\vre}$ is $O(T_k^{-\vre})$, and hence
is summable; from this it 
follows by Borel-Cantelli that the probability that for infinitely
many $k$ we have $X_k \geq T_k^{d+\vre}$ is zero.

We continue to denote by $c', \lambda$ the constants as
in \S \ref{section: heuristic}, and write $T=T_k$ to simplify
notation. Let 
$X_k^{(1)}$ (respectively, $X_k^{(2)}$) be the number of $p/q$ contributing to $X_k$ with $\ell(q) > \lambda
\log_3T$ (respectively, $\log_3T + c' \leq  \ell(q) \leq
\lambda \log_3q$). 
Let $\ell_0 = \lambda \log_3T$, which is a lower bound for $\ell(q)$
when $p/q$ contributes to $X_k^{(1)}$. Since there are fewer than
$T^2$ rationals $p/q$ with $q \in I_T$, the 
probability that $X_k^{(1)} \geq T^{d+\vre}$ is smaller than the
probability that a binomial random variable with probability
$$p=\left(\frac23\right)^{\ell_0} = T^{(d-1)\lambda}$$ 
and $T^2$ trials we will have $T^{d+\vre}$
successes. By the Markov inequality, 
this probability is bounded above
by $T^{2+(d-1)\lambda-d - \vre} = T^{-\vre}.$ 
The proof for $X_k^{(2)}$ is similar, again using the Markov
inequality and the bounds used in the proof of
\equ{eq: remains}. 
\ignore{Namely, the probability that $X_k^{(2)}$ is greater
than $T^{d +\vre}$ is bounded above by the probability that a binomial
distribution with probability 
$$
p=\left(\frac23 \right)^{\log_3 T + c'} \ll T^{d-1}
$$
and $\lambda \log T T^{1+2\lambda /\log \log T}$ trials will have
$T^{d+\vre}$ successes. Once again this probability is bounded above
by $O(T^{-\vre})$ by Markov's inequality.   
}
\subsection{Large $\ell$ and Bourgain's theorem}
To highlight the sensitivity of $\til N(T)$ to fluctuations, 
consider the expression
$$
\widehat{\ell} (q) = \left \{\begin{matrix} 
\ell(q) & N_q \neq 0
\\ 
0 & \mathrm{otherwise}
\end{matrix} \right.  \ \ \ \ \text{(with $N_q$ as in \equ{eq: defn nq});} 
$$
that is, $\widehat{\ell}(q)$ is the order of $3$ in
$(\Z/q\Z)^{\times}$ when there are rationals with denominator $q$ in
$\CC$, and zero otherwise. 
Clearly the nonzero values of $\widehat{\ell}(q)$ range between
$\log_3 q $ and $q$. If one could prove that
$\widehat{\ell}(q) \ll \log_3q$ one would obtain a simple proof of
Conjecture~\ref{conj: main}. Note that the heuristic behind Artin's
conjecture (see \cite{Moree survey}) predicts that there are infinitely many $q$
for which $\ell(q) \gg q$, so that this may appear at first sight to
be wildly optimistic. However our restriction $N_q \neq 0$ is a
stringent one. In fact, our computations found that
for all $3 \leq q < 3^{10}$, $\widehat{\ell}(q) < 
3\log_3q$ (see Table \ref{Bourgain-table}).


 On the other hand,
by observation (ii), a large value of
$\widehat{\ell}(q)$ would make a large contribution to $\til N(T)$
when $q \in I_T$. For example if there were infinitely many $q$ for
which $\widehat{\ell}(q) > q^{d+\vre}$, then their contribution alone
would yield a
contradiction to Conjecture \ref{conj: main}. However, a difficult result of
Bourgain \cite{Bourgain} implies that for any $\delta>0$, $\widehat{\ell}(q) \ll
q^{\delta}$. Bourgain's theorem is much stronger inasmuch as it
implies that the cosets of the subgroup $H$ equidistribute in the
interval $[0,1]$ when $\ell(q)> q^{\delta}$, while to obtain the upper
bound above, one only needs to know that if $\ell(q)> q^{\delta}$,
then any coset for $H$ contains at least one point in the interval
$(1/3, 2/3)$. It
would be of interest to obtain better upper bounds on
$\widehat{\ell}(q)$ than those implied by Bourgain's theorem.

 \subsection{Additional sources of fluctuations}\label{subsec: fluctuations}
 It is easy to show that \equ{eq: actually} predicts a lower bound $\til N(T)
 \gg T^d$. However we do not expect a precise asymptotic in the form
 $\til N(T) \sim 
 cT^d$, that is, we do not expect the limit of $\til N(T)/T^d$ to
 exist. There are two reasons for fluctuations in this
 expression. First consider the numbers of the form $q =  
 (3^{\ell}-1)/2,$ for which $\ell(q)=\ell$. If $c< 2/3$, depending on
 the choice of $T$, the range 
 $I_T$ may or may not contain one such number. In case it does,
 this contributes a term of order $(2/3)^{\ell} \asymp T^d$ to the sum, which would
 contribute to the main term. Thus we have fluctuations according as the
 window $I_T$ does or does not contain such $q$, or for general
 $c \in (0,1)$, depending on the number of such $q$ in the interval
 $I_T$. See Figure~\ref{various_c}.

   Although these fluctuations would contradict a precise
 asymptotic $\til N(T) \sim cT^d$, they do not preclude the weaker
 statement $\til N(T)
 \asymp T^d$. A potentially more serious source of 
 fluctuations in \equ{eq: actually} is the number $\# \, L(\ell, T)$,
 which could fluctuate considerably due to fluctuations in the numbers
 $\tau(3^{\ell}-1)$. It would be interesting to determine the
 asymptotic behavior of the right hand side of \equ{eq: actually}. 

%
 
 \subsection{Symmetries}\label{subsec: symmetries}
Heuristics (*) and (**) can also be used to make predictions for the
number $N_q$ of Cantor 
rationals {\em with a fixed denominator $q$.} However in this regime,
our computations reveal many values of $q$ for which the heuristic gives
inaccurate predictions. Some of these are shown in Tables
 \ref{symmetries1} and \ref{symmetries}. The numbers in Table
 \ref{symmetries1} are all of the form $3^r+1$, and in Table
 \ref{symmetries} we show all numbers $q$ for which $\ell(q)=24$ and
 the prediction is 
 inaccurate by a factor of 4 or more. We will consider a possible
 explanation for these 
 inaccuracies by introducing a (non-rigorous) notion of `symmetries' in base 3
 expansion. 

The identity $\frac{3^{2r}-1}{2} = \frac{(3^{r}-1) (3^r+1)}{2}$ easily
implies the following (we leave details to the reader): suppose a
purely periodic rational in 
base 3 expansion has repeating block $\omega \in \{0,2\}^{r}$,
where $r$ is the length of $\omega$, and
 $\bar{\omega}$ is the block obtained from $\omega$ by
replacing occurences of $0$ with $2$ and $2$ with $0$. Then the word
$\omega \bar{\omega}$ of 
length $2r$ obtained by 
concatenating $\omega, \bar{\omega}$ defines (via an infinite base 3
expansion $0.\omega \bar{\omega} \omega \bar{\omega} \cdots$) a number
in 
$\mathcal{C}$ whose  denominator divides
$3^{r}+1$. This implies that any $\frac{p}{3^{r}-1} \in \mathcal{C}$
gives rise to some $\frac{p'}{3^{r}+1} \in \mathcal{C}$ (and in fact,
by observation (ii) in \S \ref{section: 
  corrections}, to the $\times 3$-orbit of this word, which
typically contains $2r$ numbers). It can be deduced that
heuristic (**) underestimates
numbers $p'/q'$ with $q'$ dividing $3^{r+1}$, arising in this way, by a factor of
approximately $(3/2)^r$. The revised heuristic is borne out by Table
\ref{symmetries1}, where the last column corrects heuristic (**)
by this factor, giving a good fit with the data.

The mapping $\omega \mapsto \omega \bar{\omega}$ used above is for us
an example of a 
symmetry in base 3. Here is 
another example. Suppose $\omega, \bar{\omega}
\in \{0,2\}^r$ are as in the previous paragraph, and suppose
$\mathbf{0}$ and $\mathbf{2}$ denote strings of length
$r$ consisting only of the digit $0$ (respectively $2$). Then one may
check, this time using the identity $3^{3r}-1 =
(3^r-1)(3^{2r}+3^r+1)$, that repeating blocks $\omega \bar{\omega}
\mathbf{0}$ and 
$\omega \bar{\omega} \mathbf{2}$ give numbers in $\mathcal{C}$ whose
denominator  
divides $3^{2r}+3^r+1$. For example, taking $r=7$, we have
$q = 3^{14}+3^7+1 = 4785157,$ our heuristic (**) gives
$\mathrm{MLO}(q) = 1771$, and our computer program finds $N_q =
4158$, which is a poor fit. The number of strings of the form $\omega
\bar{\omega}\mathbf{0}$ and $\omega \bar{\omega} \mathbf{2}$, along with
all their cyclic permutations (taking into 
account observation (ii) in \S \ref{section: corrections}) is
2562. Some of these give a subset of the ones already
considered in heuristic (**), so taking this symmetry into account we
should expect
$2562 \cdot \frac{\phi(q)}{q}  = 2365 \leq N_q$.
This 
indeed gives a 
better (albeit still not very precise) prediction. We suspect that there are
more symmetries contributing to the numbers $N_q$ and hope to return
to this issue in future work.
In Table \ref{table: tara
  symmetries} we have tabulated the numbers $q_r$ for $r=2, \ldots,
10$, along with the numbers of strings of the above form multiplied by
$\phi(q)/q$, and compared this prediction with the actual number of
strings of this form which are reduced rationals with denominator
$q_r$.

\begin{table}
$$
 \begin{array}{|l|c|c|c|c|c|c|
}
  \hline
r         &q_r & N_{q_r} & X_r 
& Y_r & Z_r  & Y_r + \mathrm{MLO}(q_r) 
   \\ 
\hline &&&&&& \\
   1 & 13& 6 & 6 & 6 & 6 & 13\\
  2 & 91 & 12 & 18 & 14 & 12 & 27\\
3 & 757 & 54 & 54 &  54 & 54 & 93\\ 
   4 & 6643&120& 156 & 122 & 120 & 202\\
5 & 59293& 450 & 420 & 388 & 390 & 638\\ 
6 & 532171 & 1368 & 1062 & 978 & 1008 &1641\\ 
7 & 4785157& 4158 & 2562 &2365 & 2436 & 4136\\ 
8 & 43053283 & 9744 & 5976 & 4663 & 4560 &8654\\
9 & 387440173 &38988 & 13608 &13450 & 13500 & 26931\\
10 & 3486843451 & 91440 &30450 & 23224 & 23520 & 50961\\
\hline
\end{array}
$$
\caption{The numbers $q_r = 3^{2r}+3^r+1$ with the contribution of the
  symmetries of the form $\omega \mapsto \omega \bar{\omega}
  \mathbf{0}$ and $\omega \mapsto \omega \bar{\omega} \mathbf{2}$. The
  number $X_r$ counts all strings of length $3r$ of the specified
  form, $Y_r = \left \lfloor X_r \cdot
    \frac{\phi(q_r)}{q_r} \right \rfloor$, and $Z_r$ is the actual
  number of Cantor rationals with 
  denominator $q_r$ of this special form.
}
  \label{table: tara symmetries} 
\end{table}

When $\ell = kr$ for $k, r \in \N, k \geq 2$, we can often make a
similar construction of a repeating block of length $\ell$ which is
composed of $k$ sub-blocks of size $r$ (in the preceding two paragraphs
we gave examples with $k=2,3$). The result will be that for the
numbers
 $$ q =
 3^{(k-1)r}+ 3 ^{(k-2)r} + \cdots + 3^r +1,
$$
$N_q$ will be
significantly larger than predicted by our heuristic. The same will be
true for large divisors $q'$ of such $q$. 
Thus if
$\ell$ has many divisors, there will be many values of $q$ for which
our predictions will be poor. In all of them we expect our
heuristic to give a number which is smaller than the correct value,
and we do not expect such very poor predictions to occur when $\ell$ is prime. These
two expectations are borne out in Tables \ref{symmetries} and
\ref{validatingExperiment} below. We 
invite the reader to try to find explanations for the numbers
appearing in Table \ref{symmetries}; note that we have explained the appearance
of 531442 using a symmetry $\omega \mapsto \omega \bar{\omega}$, and
that 589771 and 84253 are large divisors of  $ 3^{16}+3^8+1 $ and can
thus be explained using the symmetries $\omega \mapsto \omega
\bar{\omega}\mathbf{0}, \ \omega \mapsto \omega \bar{\omega} \mathbf{2}$.



\ignore{
This section should contain a summary of the data, some
  graph plotting and comparison to various predictions made
  heuristically. We need (organized more or less in the order in which
  things appear in the paper):
\begin{itemize}
\item[(a.)]
A graph showing $\log N^*(T)/\log T$ for as far as we have computed
it.  
\item[(b.)]
A table showing, for some large values of $\ell$, with $3^{\ell}-1$
having both many and few divisors, a table with all the divisors $q$
and the respective values of $N_q$ and $\MLO(q)$. Similarly a table
with a range of $q$ (say $q=1000, \ldots, 1050$ if this is feasible),
and comparison of $N_q$ to $\MLO(q)$. 
\item[(c.)] A table with all $q$ for which $N_q \neq 0$ up to some
  value, showing the quotient $\ell(q) / \log_3q.$ 
\item[(d.)]
The same graph as in (a.) and (b.) after removing the $q$ for which there are
symmetries. \combarak{Noam, we need to discuss exactly what this
  means. Of course it here it would have been nicer to have another
  Cantor set with no symmetries, say in base 5}. 
\item[(e.)] For a small value of $c$, a graph of $\log \til N(T)/ \log
  T$ as a
  function of $T$. This should show significant fluctuations and the
  fluctuations should appear when $T$ is just smaller, or $(1-c)T$
  just larger than, a power of $3$.  
\end{itemize}
}

\appendix
\section{Computing the Cantor rationals of given denominator}
In this appendix, we give an algorithm to compute the set of rational numbers in the Cantor set 
of given denominator $q$, namely the Cantor rationals of reduced form $\frac{p}{q}$.
It is stated in Algorithm~\ref{Get_q_Cantor_rationals} below, and has
been implemented by the authors in Pari/GP.  
We denote by $\ell(q)$ the order of the element $3$ in the group
of multiplicative units in the ring $\Z/q\Z$ with $q$ elements.

\begin{algorithm} 
\caption{Computation of the Cantor rationals of denominator $q$}
\label{Get_q_Cantor_rationals}
{
\begin{alginc}

\State \bf Input: \rm A natural number $q$.
\State \bf Output: \rm The set of Cantor rationals of reduced form $\frac{p}{q}$. 
\State
\State Carry out the prime decomposition of $q$. 
\State Create a mask $M$ as the set of multiples of the primes in $q$ satisfying that the multiples are strictly smaller than $q$.
\State Denote by $t$ the multiplicity of 3 in the prime decomposition of $q$. 
\State Let $q' := \frac{q}{3^t}$.
\State Compute $\ell(q') := $ order of $3$ in the multiplicative group of the ring $\Z/q'\Z$.
\State Initialize the \emph{passlist} as an empty list.
\For{ $p$ running from $1$ through $q-1$, }

  \If {$p$ is not an element of the mask $M$ or the \emph{passlist},}

     \State  Let $T := \frac{p}{q}(3^{\ell(q')} -1)3^t$.
     \State  Let $A := T \mod (3^{\ell(q')} -1)$.
     \If{ $A \neq 0 \mod (3^{\ell(q')} -1)$,}
       \State Let $a$ be the lift of $A$ to $\{1,\hdots,3^{\ell(q')} -2\}_{\rm ternary}$.
       \If{ the digits of $a$ are in $\{0, 2\}$,}

     	\State Let $s := \left(\frac{T-a}{3^{\ell(q')} -1}\right)_{\rm ternary}$.
        \If{ the digits of $s$ are in $\{0, 2\}$,}

     	  \State \texttt{The fraction $\frac{p}{q}$ is a Cantor rational.}
     	  \State Record it into the set of Cantor rationals of denominator $q$.
	  \State Add 3-power multiples (if $q \neq 0 \mod 3$) of $p$ 
          \State		and their reflections to the \emph{passlist}.
	\Else
     	  \State \texttt{No 3-power multiples of $\frac{p}{q}$ are Cantor rationals.}
	  \State \mbox{Add 3-power multiples of $p$ and their reflections to the mask $M$.}
        \EndIf
       \EndIf
     \Else
	\If{ the digits of $\left(\frac{T}{3^{\ell(q')} -1}\right)_{\rm ternary}$ or $\left(\frac{T}{3^{\ell(q')} -1}-1\right)_{\rm ternary}$are in $\{0, 2\}$,}
     	  \State \texttt{The fraction $\frac{p}{q}$ is a Cantor rational.}
     	  \State Record it into the set of Cantor rationals of denominator $q$.
	  \State Add 3-power multiples$^{{\rm if} \thinspace\thinspace 3\thinspace \nmid \thinspace q}$ of $p$ and their reflections to the \emph{passlist}.
	\Else
     	  \State \texttt{No 3-power multiples of $\frac{p}{q}$ are Cantor rationals.}
	  \State \mbox{Add 3-power multiples of $p$ and their reflections to the mask $M$.}
        \EndIf
     \EndIf 
  \EndIf
\EndFor
\State Output the rationals $\frac{p}{q}$ for $p$ in the \emph{passlist}.
\end{alginc}
}
\end{algorithm}

\begin{proposition}
The set computed by algorithm \emph{\ref{Get_q_Cantor_rationals}} contains all the Cantor rationals of denominator $q$ for its reduced form. This algorithm terminates within finite time.
\end{proposition}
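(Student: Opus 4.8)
The plan is to dispatch termination immediately and to reduce correctness to two largely independent facts: that the single-numerator digit test correctly decides membership in $\CC$, and that the \emph{passlist}/mask bookkeeping never discards a genuine Cantor rational. Termination is routine: the outer loop runs over the finite range $1 \le p \le q-1$; the prime factorization of $q$ and the exponent $\ell(q')$ are computed once (the latter terminating because $3$ is a unit modulo $q' = q/3^t$, hence has finite order in the finite group $(\Z/q'\Z)^\times$); and each pass performs only finitely many modular reductions together with the inspection of the ternary digits of integers bounded by $3^t(3^\ell-1)$. So the algorithm halts, and the substance of the proposition is the claim that the output contains every reduced Cantor rational of denominator $q$.

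First I would verify that, for a numerator $p$ coprime to $q$, the digit test decides whether $p/q \in \CC$; this rests entirely on Proposition \ref{prop: rationals base 3}. Write $q = 3^t q'$ with $3 \nmid q'$ and set $\ell = \ell(q')$. Since $q' \mid 3^\ell - 1$, the quantity $T = \tfrac{p}{q}(3^\ell-1)3^t = \tfrac{p}{q'}(3^\ell-1)$ is an integer, and because $p < q$ we have $p/q' < 3^t$; hence $s = (T-A)/(3^\ell-1) = \lfloor p/q'\rfloor$ satisfies $0 \le s < 3^t$ while $a = A = T - s(3^\ell-1)$ satisfies $0 \le a < 3^\ell-1$. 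The point I would justify using Proposition \ref{prop: rationals base 3} is that the length-$t$ ternary expansion of $s$ and the length-$\ell$ ternary expansion of $a$ are, respectively, the initial block and the repeating block of a (generally non-minimal) base-$3$ representation of $x = p/q$. When $q' > 1$ the number $x$ is not a terminating ternary, its expansion is unique, and therefore $x \in \CC$ if and only if all digits of $s$ and of $a$ lie in $\{0,2\}$, which is exactly the test in the branch $A \ne 0$. The branch $A = 0$ occurs precisely when $q' = 1$, i.e. $q = 3^t$, where $x$ is a terminating ternary with two base-$3$ representations; the algorithm accounts for this by inspecting the digits of both $T/(3^\ell-1)$ and $T/(3^\ell-1)-1$. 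Checking this correspondence, and in particular the endpoint bookkeeping in the $A=0$ branch, is the most delicate step of the whole argument.

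It then remains to prove completeness. The key lemma is that the two operations generating the orbit of a numerator, namely multiplication by $3$ modulo $q$ (applied only when $3 \nmid q$) and the reflection $p \mapsto q-p$, both preserve coprimality to $q$ and membership in $\CC$: the former is observation (ii) of \S\ref{section: corrections}, and the latter is the map $x \mapsto 1-x$, which interchanges the ternary digits $0$ and $2$ and hence fixes $\CC$ setwise. Granting this lemma, the \emph{passlist} is closed under these orbits and collects only genuine Cantor numerators, while the mask $M$ receives only numerators sharing a prime factor with $q$ (never reduced Cantor rationals) and orbits of numerators that failed the digit test (which by the lemma consist entirely of non-Cantor numerators). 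Consequently no numerator of a reduced Cantor rational is ever masked, so when the loop reaches such a $p$ it is either already in the \emph{passlist}, having been inserted as an orbit element of an earlier recognized numerator, or it is tested directly and recognized by the digit test; in both cases $p/q$ is output. This yields the asserted containment, and I expect the main obstacle to be the exact verification of the digit test rather than the bookkeeping argument, which the orbit-preservation lemma renders essentially formal.
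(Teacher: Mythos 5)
Your proof is correct and follows essentially the same route as the paper's: identify $a$ and $s$ as the periodic and pre-periodic ternary digit blocks of $\frac{p}{q}$, decide membership in $\CC$ by the digit criterion, justify the mask/\emph{passlist} bookkeeping via the Cantor set symmetries ($x \mapsto 1-x$ and the multiplication-by-$3$ action), and dispatch termination by finiteness of the loop. The only difference is level of detail: you make explicit the $A=0$ branch (denominators $q=3^t$, where both ternary representations must be inspected) and the orbit-preservation lemma behind the bookkeeping, both of which the paper's terser proof leaves implicit.
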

\begin{proof} ${}$

\begin{itemize}
\item The period length of $\frac{p}{q}$ in the ternary system is given by $\ell(q')$.
Hence, the finite sequence $a$ of ternary digits is precisely the periodical sequence in $\frac{p}{q}$.
Furthermore, 
$$ \frac{s(3^{\ell(q')} -1) +a}{(3^{\ell(q')} -1)3^t} = \frac{p}{q}.$$
So, the sequence $s$ is precisely the sequence of ternary digits preceding the periodical part in the ternary expansion of $\frac{p}{q}$.
By the elementary ternary digits property of the Cantor set, algorithm~\emph{\ref{Get_q_Cantor_rationals}} decides if $\frac{p}{q}$ is a Cantor rational. The mask $M$ allows it to check all suitable fractions~$\frac{p}{q}$.
Here, and for establishing the \emph{passlist}, we make use of the
well-known symmetry of the Cantor set: If $x$ is an element of the
Cantor set, then the same holds for $(1-x)$, $\frac{x}{3}$, and ---
provided that it is in the unit interval --- $3x$. 
\item The loop in algorithm~{\ref{Get_q_Cantor_rationals}} consists of
  $(q-1)$ repetitions, which contain a finite number of finite-time
  steps. 
\end{itemize}
\end{proof}

\begin{remark}
\begin{itemize}
 \item 
The mask $M$ can be omitted and a coprimality check for $(p,q)$ inserted, to obtain a simpler algorithm which is mathematically equivalent to algorithm~{\ref{Get_q_Cantor_rationals}}.
The difference lies in the efficiency: In fact, the mask $M$ is a powerful tool to reduce the time needed to carry out the algorithm, minimizing the number of iterations of most expensive steps, which grows fast with $q$.
\item Even more important for the efficiency is the sub-algorithm testing the belonging of the ternary digits to the set $\{0,2\}$, because the numbers to be tested are incredibly great integers. 
\end{itemize}
\end{remark}

\end{document}